\begin{document}
\newtheorem{theorem}{Theorem}[section]
\newtheorem{thmdef}{Theorem-definition}[section]
\newtheorem{lemma}[theorem]{Lemma}
\newtheorem{lemmadef}[theorem]{Lemma-definition}
\newtheorem{proposition}[theorem]{Proposition}
\newtheorem{corollary}[theorem]{Corollary}
\newtheorem{conjecture}[theorem]{Conjecture}
\newtheorem{observation}[theorem]{Observation}

\theoremstyle{definition}
\newtheorem{definition}[theorem]{Definition}
\newtheorem{example}[theorem]{Example}
\newtheorem{examples}[theorem]{Examples}
\newtheorem{none}[theorem]{}
\theoremstyle{remark} 
\newtheorem{remark}[theorem]{Remark}
\newtheorem{caution}[theorem]{Caution}
\newtheorem{remarks}[theorem]{Remarks}
\newtheorem{question}[theorem]{Question}
\newtheorem{problem}{Problem}
\newtheorem{exercise}{Exercise}

\renewcommand{\bar}{\overline}
\def\C{\mathbb{C}}
\def\T{\mathcal{T}}
\def\X{\mathcal{X}}
\def\U{\mathcal{U}}
\def\P{\Phi}
\def\M{\mathcal{M}}
\def\Z{\mathcal{Z}_{m}}
\def\ZZ{\mathcal{Z}_{m}^{H}}
\def\d{\partial}
\def\cP{\mathscr P}
\def\bB{\mathbb B}
\def\TT{\mathcal{T}_{m}^{H}}

\def\i{\sqrt{-1}}

\title{Moduli spaces as ball quotients I, local theory}
\author{Kefeng Liu}
\address{School of Mathematics, Capital Normal University, Beijing, China;
Department of Mathematics, University of California at Los Angeles,
Los Angeles, CA 90095-1555, USA}
\email{liu@math.ucla.edu, kefeng@cms.zju.edu.cn}
\author{Yang Shen}
\address{Center of Mathematical Sciences, Zhejiang University, Hangzhou, Zhejiang 310027, China}
\email{syliuguang2007@163.com}

\begin{abstract} 
We give a Hodge theoretic criterion to characterize locally the period domains of certain projective manifolds 
to be complex balls.
\end{abstract}

\maketitle

\tableofcontents






\setcounter{section}{-1}
\section{Introduction}
In \cite{ACT02}, Allcock, Carlson and Toledo study the moduli space of cubic surfaces.
By studying the compactifications of the moduli spaces and the corresponding points in the period domain, they proved the global Torelli theorem for cubic surfaces.
Moreover, they showed that the moduli space of stable cubic surfaces can be realized as a ball quotient.
Several years later they also proved similar results for cubic threefolds in \cite{ACT11}.

Therefore, it is natural to find certain conditions for the polarized manifold 
under which the corresponding moduli space can be realized as a ball quotient.
The main tool is the period map, under which the problem becomes the following.

\begin{problem}\label{ball problem}
(1). Find certain conditions for polarized manifolds $X$ under which the period domain for $X$ has a submanifold $B$, which is isomorphic to the complex ball, such that the image of the period map can be embedded into $B$; 

(2). Under (1), one can construct the so-called refined period map with values in $B$. Then one may ask: when the refined period map is injective?  
\end{problem}

Problem \ref{ball problem} - (1) usually concerns the local theory of period maps, and Problem \ref{ball problem} - (2) concerns the global theory of period maps.

In this paper, as the title suggests, we mainly focus on Problem \ref{ball problem} - (1). We give a intrinsic characterization of the required conditions of the polarized manifolds for Problem \ref{ball problem} - (1), which is defined in Definition \ref{ball type}.

In the subsequent paper \cite{LS18}, we will prove global Torelli theorem for polarized manifolds satisfying the conditions of Definition \ref{ball type}, which answers Problem \ref{ball problem} - (2).

We call the polarized manifold satisfying the required conditions for Problem \ref{ball problem}, as given in Definition \ref{ball type}, a polarized manifold of ball type. Roughly speaking, for a polarized manifold $(X,L)$ of ball type, the first nonzero subspace $H_{\mathrm{pr}}^{k,n-k}(X)$ in the Hodge decomposition 
$$H_{\mathrm{pr}}^{n}(X,\C)=H_{\mathrm{pr}}^{n,0}(X)\oplus H_{\mathrm{pr}}^{n-1,1}(X)\oplus \cdots \oplus H_{\mathrm{pr}}^{0,n}(X)$$
satisfying the following properties:

(i)  There exists $\Omega \in H_{\mathrm{pr}}^{k,n-k}(X)$ such that $\{\theta_{i} \lrcorner \Omega\}_{1\le i\le N}$, is linearly independent in $H_{\mathrm{pr}}^{k-1,n-k+1}(X)$, where $\{\theta_{i}\}_{1\le i\le N}$ is the basis of $H^{1}(X,\Theta_{X})$.

(ii) $\theta_{i}\lrcorner\theta_{j}\lrcorner (H_{\mathrm{pr}}^{k,n-k}(X))=0$, for any $1\le i,j \le N$.

The first property means that the infinitesimal Torelli theorem holds for such polarized manifolds. As is proved in this paper, the second property annihilates the higher order terms in the local expansion of the canonical section of the Hodge bundle $\mathcal H^{k,n-k}$. Precisely, as the main result of this paper, we will prove the following theorem.

\begin{theorem}\label{intr linear expansion}
Let $X$ be the polarized manifold of ball type.
Let $f:\, \mathcal X\to S$ be a family of polarized manifolds of ball type over the complex
manifold $S$, containing the base point $p\in S$ such that $X\simeq f^{-1}(p)$.
Under the canonical coordinate $\{U;z^c\}$ (to be defined in Definition \ref{canonical coordinate definition}) around the base point $p$ of $S$, we have the following expansion of the holomorphic section $\Omega(z^c)$ for the polarized manifolds of ball type as
\begin{align}\label{linear expansion0}
\Omega(z^c)=\Omega_0+\sum_{1\le i\le N}[\theta^c_i \lrcorner\Omega_0]z_i^c.
\end{align}
\end{theorem}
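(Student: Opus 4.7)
The plan is to Taylor-expand $\Omega(z^c)$ in the canonical coordinates $\{U;z^c\}$ around the base point $p$, and then apply the ball type condition (ii) to kill every term of order $\ge 2$, leaving exactly the right-hand side of \eqref{linear expansion0}.

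First I would write the holomorphic section as a convergent power series
\begin{align*}
\Omega(z^c)=\Omega_0+\sum_{i}A_i\,z_i^c+\frac{1}{2}\sum_{i,j}A_{ij}\,z_i^cz_j^c+\cdots
\end{align*}
and identify the Taylor coefficients. By Griffiths transversality, $\partial_i\Omega|_p\in H_{\mathrm{pr}}^{k,n-k}(X)\oplus H_{\mathrm{pr}}^{k-1,n-k+1}(X)$. The role of the canonical coordinates (Definition \ref{canonical coordinate definition}) is precisely to kill the $H^{k,n-k}$-component and to normalize the chosen holomorphic frame for the Hodge bundle so that in this frame $A_i=[\theta_i^c\lrcorner\Omega_0]$. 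Iterating the argument, each higher-order coefficient $A_{i_1\cdots i_m}$ is exhibited, in the canonical trivialization, as the pure iterated contraction $[\theta_{i_1}^c\lrcorner\cdots\lrcorner\theta_{i_m}^c\lrcorner\Omega_0]$, with no connection-correction terms.

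Second, I would apply the ball type condition (ii): since $\Omega_0\in H_{\mathrm{pr}}^{k,n-k}(X)$ and $\theta_i\lrcorner\theta_j\lrcorner(H_{\mathrm{pr}}^{k,n-k}(X))=0$ for all $i,j$, every coefficient $A_{i_1\cdots i_m}$ with $m\ge 2$ contains a double contraction $\theta_{i_1}^c\lrcorner\theta_{i_2}^c\lrcorner\Omega_0=0$ as a building block, and hence vanishes. The surviving terms are the constant $\Omega_0$ and the linear piece $\sum_i[\theta_i^c\lrcorner\Omega_0]z_i^c$, which is exactly \eqref{linear expansion0}. Property (i) of the ball type definition is what guarantees that the linear term is nondegenerate (its coefficients being linearly independent in $H^{k-1,n-k+1}_{\mathrm{pr}}(X)$), but it is not needed to obtain the stated identity.

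The main obstacle is the rigorous identification of the higher-order Taylor coefficients with pure iterated contractions. A priori the $m$-th derivative $\partial^m\Omega/\partial z^c_{i_1}\cdots\partial z^c_{i_m}|_p$ involves not only the iterated action of the Kodaira--Spencer map on $\Omega_0$, but also correction terms arising from derivatives of the Kodaira--Spencer field along the base, from Christoffel-type contributions of the Gauss--Manin connection, and from the choice of frame on $\mathcal H^{k,n-k}$. The very point of the canonical coordinate construction is that these corrections vanish identically, so that the expansion of $\Omega(z^c)$ decouples into pieces lying successively in $\mathcal H^{k-j,n-k+j}_{\mathrm{pr}}$. Making this decoupling precise, and carefully tracking the projections onto the various Hodge summands, is the technical heart of the argument; once that is in place, condition (ii) delivers the truncation to first order almost immediately.
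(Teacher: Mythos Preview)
Your overall plan---Taylor expand, identify coefficients, apply $(*_2)$---is the right shape, but the step you flag as ``the main obstacle'' is genuinely a gap, and the paper does \emph{not} resolve it in the way you suggest.

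You assert that in canonical coordinates every higher-order coefficient $A_{i_1\cdots i_m}$ equals the pure iterated contraction $[\theta_{i_1}^c\lrcorner\cdots\lrcorner\theta_{i_m}^c\lrcorner\Omega_0]$, and then invoke $(*_2)$ \emph{at the base point} to kill them all. The paper only establishes this identification at order $2$ (Theorem~\ref{canonical expansion theorem}), and even that requires the order estimates of Proposition~\ref{lm expansion proposition} together with a type-comparison argument to show the Beltrami correction $\theta_{ij}^c\lrcorner\Omega_0$ vanishes. No such identification is proved (or claimed) for $m\ge 3$, and it is not clear that the canonical coordinate construction alone forces the higher Beltrami corrections $\theta^c_{i_1\cdots i_m}\lrcorner\Omega_0$ to vanish at $p$.

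The paper's argument takes a different route that avoids this difficulty entirely. After establishing the expansion through second order (so that $\Omega(z^c)=\Omega_0+\sum_i[\theta_i^c\lrcorner\Omega_0]z_i^c+O(|z^c|^3)$ with the remainder in $\bigoplus_{j\ge 2}H^{k-j,n-k+j}_p$), it applies $(*_2)$ not only at $p$ but at \emph{every nearby point} $z^c$---legitimate because the whole family is of ball type. This yields $\partial^2\Omega/\partial z_i^c\partial z_j^c(z^c)=[\theta_{ij}^c(z^c)\lrcorner\Omega(z^c)]\in F^{k-1}(X_{z^c})/F^k(X_{z^c})$, which under the adapted basis has nonzero $\eta_{(1)}$-component unless it vanishes; comparing with the fact (from the order-$2$ step) that this second derivative lies in $\bigoplus_{\beta\ge 3}H^{n-\beta,\beta}_p$ forces it to be zero identically on $U$. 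Hence $\Omega$ is affine in $z^c$. In short: the paper kills the higher-order terms by showing the \emph{Hessian vanishes on a neighbourhood}, not by identifying each Taylor coefficient at $p$.

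There is a second, smaller gap: in the general ball-type setting $\dim H_{\mathrm{pr}}^{k,n-k}(X)$ need not be $1$ and $\{\theta_i\lrcorner\Omega_0\}$ need not span $H_{\mathrm{pr}}^{k-1,n-k+1}(X)$. The paper first disposes of the ``extra'' $H^{k-1,n-k+1}$-components (the $I_1^c$ piece of $\Phi^{(1,0)}$) by a separate Griffiths-transversality argument at all points near $p$; your proposal does not address this.
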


From the Hodge-Riemann bilinear relations, we derive that 
$$-1+\sum_{1\le i\le N}|z_i^c|^{2}<0,$$ which tells us that the image of the refined period map lies in a unit ball $\mathbb{B}^N$ in a complex Euclidean space $\mathbb{C}^N$. 

Moreover, we can define the refined period map
$$\cP:\, S \to \Gamma \backslash \mathbb{P}(H),$$
from the base manifold $S$ of the analytic family $f:\, \mathcal X \to S$ of polarized manifolds of ball type, such that any $q\in S$ is mapped to the complex line
$$L_{q}=\mathrm{Span}_{\C}\{\Omega_{q}\} \subset H_{\mathrm{pr}}^{n}(X_{q},\C)$$
 spanned by $\Omega_{q}$ in $H_{\mathrm{pr}}^{k,n-k}(X_{q})$ in Definition \ref{ball type}.
Here $$H:=H_{\mathrm{pr}}^{n}(X_{p}, \C), \, p\in S \text{ is the base point},$$ 
such that $H$ is isomorphic to $H_{\mathrm{pr}}^{n}(X_{q}, \C)$, and $\Gamma$ is the corresponding monodromy group.
Then Theorem \ref{intr linear expansion} is equivalent to the following theorem.

\begin{theorem}
The refined period domain $\mathbb{P}(H)$ is isomorphic to the unit ball $\mathbb{B}^N$ in a complex Euclidean space $\mathbb{C}^N$. Moreover,
the refined period map
$$\cP:\, S \to \Gamma \backslash \mathbb{B}^N$$
is locally isomorphic.
\end{theorem}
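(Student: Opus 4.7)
The plan is to deduce both assertions directly from the linear expansion (\ref{linear expansion0}) combined with the Hodge-Riemann bilinear relations applied on the polarized manifold $X$.

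First I would use property (i) of the ball-type condition to note that the $N+1$ vectors $\Omega_0,\,\theta_1^c \lrcorner \Omega_0,\ldots,\theta_N^c \lrcorner \Omega_0$ are linearly independent in $H$. The expansion $\Omega(z^c)=\Omega_0+\sum_i z_i^c(\theta_i^c\lrcorner\Omega_0)$ then realizes the complex line $L_q\in\mathbb{P}(H)$ as the projective point with homogeneous coordinates $[1:z_1^c:\cdots:z_N^c]$ in the affine chart dual to $\Omega_0$ of the $(N{+}1)$-dimensional projective subspace spanned by $\Omega_0$ and the $\theta_i^c\lrcorner\Omega_0$. In this chart the refined period map $\cP$ is just $q\mapsto(z_1^c(q),\ldots,z_N^c(q))$.

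Next I would compute the polarization $Q(\Omega(z^c),\overline{\Omega(z^c)})$ by expanding bilinearly. Since $\Omega_0\in H_{\mathrm{pr}}^{k,n-k}$ and each $\theta_i^c\lrcorner\Omega_0\in H_{\mathrm{pr}}^{k-1,n-k+1}$ lie in distinct Hodge components, the first Hodge-Riemann relation makes all cross-terms $Q(\Omega_0,\overline{\theta_j^c\lrcorner\Omega_0})$ vanish, leaving only the diagonal $Q(\Omega_0,\bar\Omega_0)$ and the Hermitian quadratic form $\sum_{i,j}z_i^c\bar z_j^c\,Q(\theta_i^c\lrcorner\Omega_0,\overline{\theta_j^c\lrcorner\Omega_0})$. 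After normalizing so that the contribution of $\Omega_0$ is $+1$ and choosing the basis $\{\theta_i^c\}$ to diagonalize the Hermitian form on the span of $\{\theta_i^c\lrcorner\Omega_0\}$, the sign-alternation of the second Hodge-Riemann relation between adjacent Hodge levels converts the positivity condition into
$$-1+\sum_{i=1}^N|z_i^c|^2<0,$$
which is precisely the defining inequality of the unit ball $\mathbb{B}^N\subset\mathbb{C}^N$ inside the affine chart.

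For the local isomorphism, the canonical coordinates $\{z^c\}$ form a holomorphic chart of $S$ around $p$ by Definition \ref{canonical coordinate definition}, and the coordinate description above shows that $\cP$ is the identity on $\mathbb{B}^N$-coordinates up to the linear identification, hence a local biholomorphism onto an open neighborhood of the origin. Composing with the quotient $\mathbb{B}^N\to\Gamma\backslash\mathbb{B}^N$, which is a local biholomorphism since the monodromy group $\Gamma$ acts properly discontinuously, yields the claim.

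The delicate step will be the sign bookkeeping in the Hodge-Riemann relations: one must verify that the $(\sqrt{-1})^{n-2p}$-weighted polarization has Hermitian signature exactly $(1,N)$ on the subspace spanned by $\Omega_0$ and the $\theta_i^c\lrcorner\Omega_0$, rather than the opposite signature or an indefinite one, so that the inequality above opens in the correct direction. Condition (ii) of Definition \ref{ball type} plays an essential role upstream, since it is precisely what forces the absence of quadratic and higher-order contributions in (\ref{linear expansion0}); without it the image would in general be cut out by a more complicated quadric rather than the round ball.
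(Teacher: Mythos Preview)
Your proposal is correct and follows essentially the same route as the paper: both arguments feed the linear expansion \eqref{linear expansion0} into the Hodge--Riemann relations (after choosing an orthonormal adapted basis at $p$) to obtain $-1+\sum_i|z_i^c|^2<0$, and both read off the local isomorphism from the fact that the canonical coordinates $z^c$ are simultaneously a chart on $S$ and the affine coordinates of the target. The only cosmetic difference is that the paper packages the computation via the matrix representation of $\Phi(q)$ in $N_+$ (Lemma~\ref{linear expansion'} and the theorem following it), whereas you expand $Q(\Omega(z^c),\overline{\Omega(z^c)})$ directly; note also that the linear independence you invoke is condition~$(*_1)$ of Definition~\ref{ball type}(ii), not condition~(i).
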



\section{Discussions on the basic assumptions}\label{basic assump}
Let $(X,L)$ be a polarized manifold, i.e. $X$ is a projective manifold and $L$ is an ample line bundle on $X$. For simplicity, we sometimes simply call $X$ a polarized manifold.

Now let us introduce the definition of polarized manifolds of ball type, which is used to characterize the corresponding period domain admitting a complex ball as subset. 

\begin{definition}\label{ball type}
The polarized manifold $(X,L)$ is said to be of ball type if the following conditions are satisfied: 
\begin{itemize}
\item[(i)] There exists some integer $k$ with $0\le k\le n$ such that
$$H_{\mathrm{pr}}^{l,n-l}(X)=0$$
for $k\le l\le n$.

\item[(ii)] There is an element $\Omega$ in $H_{\mathrm{pr}}^{k,n-k}(X)$ such that, if $\{\theta_{i}\}_{1\le i\le N}$ is the basis of $H^{1}(X,\Theta_{X})$, then 
\begin{equation}
\{\theta_{i} \lrcorner \Omega\}_{1\le i\le N} \textrm{ is linearly independent in } H_{\mathrm{pr}}^{k-1,n-k+1}(X), \tag{$*_1$} 
\end{equation}
and 
\begin{equation}
\theta_{i}\lrcorner\theta_{j}\lrcorner (H_{\mathrm{pr}}^{k,n-k}(X))=0, \text{ for any }1\le i,j \le N. \tag{$*_2$}
\end{equation}
\end{itemize}
\end{definition}

%
%

The motivation of the above conditions is to construct the decomposition of the cohomology groups $H_{\mathrm{pr}}^{n}(X,\C)$, which is invariant under deformation. Here the notion of deformation invariant cohomology groups is defined as follows. 

First recall that a deformation of the polarized manifold $X$ over the polydisc $\Delta$ is given by an analytic family $\pi:\, \mathcal{X}\to \Delta$ of polarized manifolds such that $X_{t}:\,=\pi^{-1}(t)$ is a polarized manifold for any $t \in \Delta$, and the central fiber $X_{0}:\,=\pi^{-1}(0)$ is identified with $X$.

By classical deformation theory, we know that the analytic family $\pi:\, \mathcal{X}\to \Delta$ admits a transversely holomorphic trivialization
$$F_{\sigma}=(\sigma,\pi):\, \mathcal{X} \to X_{0}\times \Delta,$$
with the $C^{\infty}$ projection map $\sigma:\, \mathcal{X} \to X_{0}$ whose fibers are complex holomorphic polydisks meeting $X_{0}$ transversely. For any $t \in \Delta$, $F_{\sigma}$ induces a diffeomorphism 
$$\sigma_{t}:\, X_{t} \to X_{0}.$$
See the paper of Clemens \cite{Clemens05} for the details of the transversely holomorphic trivialization.

\begin{definition}\label{deformation invariant}
Let $X$ be a polarized manifold. 

(1) The subgroup $$H_{0}^{n}(X,\C) \subset H^{n}(X,\C)$$
is called stable under deformation, provided that for any deformation $\pi:\, \mathcal{X}\to \Delta$ of $X$ with $X_{0}=X$, there exist sections $\alpha_{1}(t), \cdots , \alpha_{r}(t)$ of $(\sigma_{t}^{-1})^{*}(H^{n}(X_{t},\C))=H^{n}(X,\C)$, which are holomorphic in $t\in \Delta$, such that 
$$(\sigma_{t}^{-1})^{*}(H_{0}^{n}(X_{t},\C))= \C\{\alpha_{1}(t), \cdots , \alpha_{r}(t)\}$$
where $\sigma_{t}:\, X_{t} \to X_{0}$ is the diffeomorphism defined as above;

(2) The decomposition 
$$H^{n}(X,\C)=H_{0}^{n}(X,\C)\oplus H_{c}^{n}(X,\C)$$
is called stable under deformation, 
provided that there exist sections $$\alpha_{1}(t), \cdots , \alpha_{r}(t), \alpha_{r+1}(t), \cdots , \alpha_{h^{n}}(t)$$ of $(\sigma_{t}^{-1})^{*}(H^{n}(X_{t},\C))=H^{n}(X,\C)$, which are simultaneously holomorphic in $t\in \Delta$, such that 
$$(\sigma_{t}^{-1})^{*}(H_{0}^{n}(X_{t},\C))= \C\{\alpha_{1}(t), \cdots , \alpha_{r}(t)\}$$
and
$$(\sigma_{t}^{-1})^{*}(H_{c}^{n}(X_{t},\C))= \C\{\alpha_{r+1}(t), \cdots , \alpha_{h^{n}}(t)\},$$
where $h^{n}=\dim_{\C} H^{n}(X,\C)$.
\end{definition}

\begin{definition}\label{deformation invariant1}
Let $X$ be a polarized manifold. 
The subgroup $$H_{0}^{n}(X,\C) \subset H^{n}(X,\C)$$
is called invariant under deformation, or deformation invariant, provided that for any deformation $\pi:\, \mathcal{X}\to \Delta$ of $X$ with $X_{0}=X$, there exists a subgroup $$H_{0}^{n}(X_{t},\C) \subset H^{n}(X_{t},\C)$$ for any $t \in \Delta$ , which is independent of the choice the deformation $\pi:\, \mathcal{X}\to \Delta$, such that 
$$(\sigma_{t}^{-1})^{*}(H_{0}^{n}(X_{t},\C))=H_{0}^{n}(X,\C).$$
\end{definition}

From Definition \ref{deformation invariant}, we know that, if there exists a line bundle $\mathcal L$ on the total space $\mathcal X$ of the family $\pi:\, \mathcal X \to \Delta$ such that $\mathcal L|_{X_{t}}$ is ample on $X_{t}$ for any $t\in \Delta$, then $H_{\mathrm{pr}}^{n}(X,\C)$ is a deformation invariant subgroup of itself. 

\begin{remark}
By the definition of period maps, we know that the subgroup
$$F^{k}H_{\mathrm{pr}}^{n}(X,\C)\subset H_{\mathrm{pr}}^{n}(X,\C)$$
is stable under deformation for $0\le k\le n$.
While, due to Griffiths transversality, the Hodge decomposition 
$$H_{\mathrm{pr}}^{n}(X,\C) =\bigoplus_{p+q=n} H_{\mathrm{pr}}^{p,q}(X)$$
is NOT stable under deformation.
\end{remark}


\begin{remark}
Under conditions (i) and (ii) of polarized manifolds of ball type in Definition \ref{ball type}, the subgroup
\begin{equation}\label{di 1}
\C\{\Omega, \theta_{1}\lrcorner \Omega, \cdots, \theta_{N}\lrcorner \Omega\}\subset F^{k-1}(X)=H_{\mathrm{pr}}^{k,n-k}(X) \oplus H_{\mathrm{pr}}^{k-1,n-k+1}(X)
\end{equation}
is stable under deformation as a subgroup of $H_{\mathrm{pr}}^{n}(X,\C)$.
See Corollary \ref{omega stable} below for the proof.

Let $H_{\Omega}:\,= \C\{\Omega, \theta_{1}\lrcorner \Omega, \cdots, \theta_{N}\lrcorner \Omega\}$. Then as an application of the main results in our paper, c.f. Theorem \ref{linear expansion} and Lemma \ref{linear expansion'}, we can conclude directly that 
$$ H_{\Omega} \oplus \bar{H}_{\Omega} \subset H_{\mathrm{pr}}^{n}(X,\C)$$
is deformation invariant.
\end{remark}


Deformation invariant subgroups of $H_{\mathrm{pr}}^{n}(X,\C)$ can determine the geometry of the moduli space.
We come up with this notion via understanding the results in \cite{ACT02} and \cite{ACT11} of Allcock, Carlson, and Toledo.

We introduce the notion of eigenperiod domain to overview the main idea of \cite{ACT02} and \cite{ACT11}. See \cite{DK}, or \cite{LS15}, for the detailed discussion of eigenperiod domain and the corresponding eigenperiod map.

Let $\pi:\, \mathcal{X} \to \Delta$ be the analytic family. 
For convenience, we assume that the family $\pi$ is universal.
Suppose that there exits a line bundle $\mathcal{L}$ on $\mathcal{X}$ such that
the restriction $L_{t}=\mathcal{L}|_{X_{t}}$ on $X_{t}=\pi^{-1}(t)$, for any $t\in \Delta$, is an ample line bundle on the polarized manifold $X_{t}$.

Let $G$ be a finite abelian group acting holomorphically on $\mathcal{X}$, preserving the line bundle $\mathcal{L}$ on $\mathcal{X}$. 
We assume that $g(X_{t})=X_{t}$ for any $t\in \Delta$ and $g\in G$.
Then the diffeomorphism 
$$\sigma_{t}:\, X_{t} \to X_{0}.$$
gives the identification $$(H_{\mathrm{pr}}^{n}(X_{t},\mathbb{Z})/\text{Tor},Q)\stackrel{\sim}{\longrightarrow}(H_{\mathrm{pr}}^{n}(X_{0},\mathbb{Z})/\text{Tor},Q)$$ for any $t\in \Delta$, where $Q$ is the Poincar\'e paring.
Since $G$ preserves the line bundle $\mathcal{L}$ on $\U$, we have a induced action of $G$ on $H_{\mathrm{pr}}^{n}(X_{0},\mathbb{Z})/\text{Tor}$ preserving $Q$, i.e. we have a representation $$\rho:\, G\to \text{Aut}(H_{\mathrm{pr}}^{n}(X_{0},\mathbb{Z})/\text{Tor},Q).$$

Let $\chi\in \text{Hom}(G,\C^{*})$ be a character of $G$. Define the eigenspace
$$H_{\chi}^n(X,\C)=\{v\in H_{\mathrm{pr}}^n(X,\C):\, \rho(g)(v)=\chi(g)v, \forall g\in G\}.$$
Since the action of $G$ is defined on $\mathcal{X}$ and the family $\pi$ is universal, we see that eigenspace $$H_{\chi}^n(X,\C) \subset H_{\mathrm{pr}}^n(X,\C)$$ is deformation invariant, and so is the decomposition 
$$H_{\mathrm{pr}}^n(X,\C)=\bigoplus_{\chi\in \text{Hom}(G,\C^{*})} H_{\chi}^n(X,\C).$$

The eigenperiod domain is then defined by the set of all the Hodge decompositions 
$$H_{\chi}^n(X,\C) =\bigoplus_{p+q=n} H_{\chi}^{p,q}(X,\C),$$
of the eigenspace $H_{\chi}^n(X,\C)$, where $$H_{\chi}^{p,q}(X,\C) :\, =H_{\mathrm{pr}}^{p,q}(X,\C) \cap H_{\chi}^n(X,\C).$$ See \cite{LS15} for the precise definition.

We will compute the cases of cubic surfaces, cubic threefolds and Deligne-Mostow 
theory for corresponding eigenperiod domain, in Section \ref{Examples}.


\section{Period domains and Lie groups}\label{Lie}

Let $H_{\mathbb{Z}}$ be a fixed lattice and $H=H_{\mathbb{Z}}\otimes_{\mathbb{Z}}
\C$ the  complexification. Let $n$ be a positive integer, and $Q$ a
bilinear form on $H_{\mathbb{Z}}$ which is symmetric if $n$ is even
and skew-symmetric if $n$ is odd. Let $h^{i,n-i}$, $0\le i\le n$, be
integers such that $\sum_{i=0}^{n}h^{i,n-i}=\dim_{\C}H$. The period
domain $D$ for the polarized Hodge structures of type
$$\{H_{\mathbb{Z}}, Q, h^{i,n-i}\}$$ is the set of all the collections
of the subspaces $H^{i,n-i}$, $0\le i\le n$, of $H$ such that
$$H=\bigoplus_{0\le i\le n}H^{i,n-i}, \,H^{i,n-i}=\overline{H^{n-i,i}}, \, \dim_{\C} H^{i,n-i}= h^{i,n-i} \text{ for } 0\le i\le n,$$
and on which $Q$ satisfies the  Hodge-Riemann bilinear relations,
\begin{eqnarray}
Q\left ( H^{i,n-i}, H^{j,n-j}\right )=0\text{ unless }i+j=n;\label{HR1}\\
\left (\sqrt{-1}\right )^{2k-n}Q\left ( v,\bar v\right )>0\text{ for
}v\in H^{k,n-k}\setminus\{0\}. \label{HR2}
\end{eqnarray}

Alternatively, in terms of Hodge filtrations, the period domain $D$
is the set of all the  collections of the filtrations
$$H=F^{0}\supset F^{1}\supset \cdots \supset F^{n},$$ such that
\begin{align}
& \dim_{\mathbb{C}} F^i=f^i,  \label{periodcondition} \\
& H=F^{i}\oplus \overline{F^{n-i+1}},\text{ for } 0\le i\le n,\nonumber
\end{align}
where $f^{i}= h^{n,0}+\cdots +h^{i,n-i}$, and on which $Q$ satisfies the  Hodge-Riemann bilinear relations in the form of Hodge filtrations
\begin{align}
& Q\left ( F^i,F^{n-i+1}\right )=0;\label{HR1'}\\
& Q\left ( Cv,\bar v\right )>0\text{ if }v\ne 0,\label{HR2'}
\end{align}
where $C$ is the Weil operator given by $$Cv=\left (\sqrt{-1}\right )^{2k-n}v$$ for $v\in F^{k}\cap \overline{F^{n-k}}$.



Let $(X,L)$ be a polarized manifold with $\dim_\C X=n$, which means that
 $X$ is a projective manifold and $L$ is an ample line bundle on
 $X$. 
The $n$-th primitive cohomology groups
$H^n(X,\C)$ of $X$ is defined by
$$H_{\mathrm{pr}}^n(X,\C)=\ker\{\cdot \wedge c_{1}(L):\, H^n(X,\C)\to H^{n+2}(X,\C)\}.$$

 Let $\Phi:\, S\to \Gamma\backslash D$ be the period map
from geometry.  More precisely we have an algebraic family $$f:\, \X\to S$$ of polarized algebraic manifolds over a quasi-projective manifold $S$, such that for
any $q\in S$, the point $\Phi(q)$, modulo certain action of the monodromy group $\Gamma$, represents the Hodge structure of the $n$-th primitive cohomology group
$H_{\mathrm{pr}}^{n}(X_{q},\C)$ of the fiber $X_{q}=f^{-1}(q)$. Here $H\simeq H_{\mathrm{pr}}^{n}(X_{q},\C)$ for any $q\in S$.

Recall that the monodromy group $\Gamma$, or global monodromy group, is the image of the representation of $\pi_1(S)$
in  $\text{Aut}(H_{\mathbb{Z}},Q)$. 
Here $\text{Aut}(H_{\mathbb{Z}},Q)$ denotes the group of automorphisms of $H_{\mathbb{Z}}$ preserving $Q$.

By taking a finite index torsion-free subgroup of $\Gamma$,  we can assume that $\Gamma$ is
torsion-free,  therefore $\Gamma\backslash D$ is smooth. This way we can just proceed on a finite cover of $S$ without loss of generality.  We refer
the reader to the proof of Lemma IV-A, pages 705 -- 706 in
\cite{Sommese} for such standard construction.

Since period map is locally liftable, we can lift the period map to $\P : \T \to D$ by taking the universal cover $\T$ of $S$ such that the diagram
\begin{equation}\label{periodlifting}
\xymatrix{
\T \ar[r]^-{\P} \ar[d]^-{\pi} & D\ar[d]^-{\pi}\\
S \ar[r]^-{\Phi} & \Gamma\backslash D
}
\end{equation}
is commutative.

Now we fix a point $p$ in $\T$ and its image $o=\P(p)$ as the reference points or base points, and denote the Hodge decomposition corresponding to the point $o=\P(p)$ as $$H_{\mathrm{pr}}^n(X_p, {\mathbb{C}})=H^{n, 0}_p\oplus H^{n-1, 1}_p\oplus\cdots \oplus H^{1, n-1}_p\oplus H^{0, n}_p, $$
 where $H_{p}^{i,n-i}=H_{\mathrm{pr}}^{i,n-i}(X_{p})$ for $0\le i \le n$, and the Hodge filtration by
 $$H_{\mathrm{pr}}^n(X_p, {\mathbb{C}})=F_{p}^{0}\supset F_{p}^{1}\supset \cdots \supset F_{p}^{n}$$
with $F_{p}^{i}=H_{p}^{n,0}\oplus \cdots \oplus H_{p}^{i,n-i}$ for $0\le i \le n$.

Let us introduce the notion of adapted basis for the given Hodge decomposition or Hodge filtration.
We call a basis $$\xi=\left\{ \xi_0, \cdots, \xi_{f^{n}-1},\xi_{f^{n}}, \cdots ,\xi_{f^{n-1}-1}, \cdots, \xi_{f^{k+1}}, \cdots, \xi_{f^k-1}, \cdots, \xi_{f^{1}},\cdots , \xi_{f^{0}-1} \right\}$$ of $H^n(X_p, \mathbb{C})$ an adapted basis
 for the given Hodge decomposition if it satisfies
 $$H^{k, n-k}_p=\text{Span}_{\mathbb{C}}\left\{\xi_{f^{k+1}}, \cdots, \xi_{f^k-1}\right\}.$$
We call a basis
\begin{align*}
\zeta=\left\{ \zeta_0, \cdots, \zeta_{f^{n}-1},\zeta_{f^{n}}, \cdots ,\zeta_{f^{n-1}-1}, \cdots, \zeta_{f^{k+1}}, \cdots, \zeta_{f^k-1}, \cdots,\zeta_{f^{1}},\cdots , \zeta_{f^{0}-1} \right\}
\end{align*}
of $H^n(X_p, {\mathbb{C}})$ an adapted basis for the given filtration
if it satisfies $$F^{k}_p=\text{Span}_{\mathbb{C}}\{\zeta_0, \cdots, \zeta_{f^k-1}\}.$$
For convenience, we set $f^{n+1}=0$ and $m=f^0$.

\begin{definition}\label{blocks}
(1) The blocks of an $m\times m$ matrix $\Psi=(\Psi_{ij})_{0\le i,j\le m-1}$ are set as follows. For each
$0\leq \alpha, \beta\leq n$, the $(\alpha, \beta)$-th block
$\Psi^{(\alpha, \beta)}$ is defined by
\begin{align}\label{block}
\Psi^{(\alpha, \beta)}=\left(\Psi_{ij}\right)_{f^{-\alpha+n+1}\leq i \leq f^{-\alpha+n}-1, \ f^{-\beta+n+1}\leq j\leq f^{-\beta+n}-1}.
\end{align}
In particular, $\Psi =(\Psi^{(\alpha,\beta)})_{0\le \alpha,\beta \le n}$ is called a {block lower triangular matrix} if
$\Psi^{(\alpha,\beta)}=0$ whenever $\alpha<\beta$.

(2) The blocks of the adapted basis $$\xi=\left\{ \xi_0, \cdots, \xi_{f^{n}-1}, \cdots , \xi_{f^{k+1}}, \cdots, \xi_{f^k-1}, \cdots, \xi_{f^{1}},\cdots , \xi_{f^{0}-1} \right\}$$ are defined by
$$\xi_{(\alpha)}=\{\xi_{f^{-\alpha +n+1}}, \cdots, \xi_{f^{-\alpha +n}-1}\},$$
for $0\le \alpha \le n$. Then 
$$\xi=\{\xi_{(0)},\cdots, \xi_{(n)}\}.$$
\end{definition}

Let $H_{\mathbb{F}}=H_{\mathrm{pr}}^n(X, \mathbb{F})$, where $\mathbb{F}$ can be chosen as $\mathbb{Z}$, $\mathbb{R}$, $\mathbb{C}$. Then $H=H_{\mathbb{C}}$ under this notation. We define the complex Lie group
\begin{align*}
G_{\mathbb{C}}=\{ g\in GL(H_{\mathbb{C}})|~ Q(gu, gv)=Q(u, v) \text{ for all } u, v\in H_{\mathbb{C}}\},
\end{align*}
and the real one
\begin{align*}
G_{\mathbb{R}}=\{ g\in GL(H_{\mathbb{R}})|~ Q(gu, gv)=Q(u, v) \text{ for all } u, v\in H_{\mathbb{R}}\}.
\end{align*}
We also have $$G_{\mathbb Z}=\text{Aut}(H_{\mathbb{Z}},Q) =\{ g\in GL(H_{\mathbb{Z}})|~ Q(gu, gv)=Q(u, v) \text{ for all } u, v\in H_{\mathbb{Z}}\}.$$

 Griffiths in \cite{Griffiths1} showed that $G_{\mathbb{C}}$ acts on
$\check{D}$ transitively, so does $G_{\mathbb{R}}$ on $D$. The
stabilizer of $G_{\mathbb{C}}$ on $\check{D}$ at the base point $o$
is
$$B=\{g\in G_\C| gF_p^k=F_p^k,\ 0\le k\le n\},$$
and the one of $G_{\mathbb{R}}$ on $D$ is $V=B\cap G_\mathbb{R}$.
Thus we can realize $\check{D}$, $D$ as
$$\check{D}=G_\C/B,\text{ and }D=G_\mathbb{R}/V$$
so that $\check{D}$ is an algebraic manifold and $D\subseteq
\check{D}$ is an open complex submanifold.

The Lie algebra $\mathfrak{g}$ of the complex Lie group $G_{\mathbb{C}}$ is
\begin{align*}
\mathfrak{g}&=\{X\in \text{End}(H_\mathbb{C})|~ Q(Xu, v)+Q(u, Xv)=0, \text{ for all } u, v\in H_\mathbb{C}\},
\end{align*}
and the real subalgebra
$$\mathfrak{g}_0=\{X\in \mathfrak{g}|~ XH_{\mathbb{R}}\subseteq H_\mathbb{R}\}$$
is the Lie algebra of $G_\mathbb{R}$. Note that $\mathfrak{g}$ is a
simple complex Lie algebra and contains $\mathfrak{g}_0$ as a real
form,  i.e. $\mathfrak{g}=\mathfrak{g}_0\oplus \i\mathfrak{g}_0$.

On the linear space $\text{Hom}(H_\C,H_\C)$ we can give a Hodge structure of weight zero by
\begin{align*}
\mathfrak{g}=\bigoplus_{k\in \mathbb{Z}} \mathfrak{g}^{k,
-k}\quad\text{with}\quad\mathfrak{g}^{k, -k}= \{X\in
\mathfrak{g}|~XH^{r, n-r}_p\subseteq H^{r+k, n-r-k}_p,\ \forall r
\}.
\end{align*}

By the definition of $B$, the Lie algebra $\mathfrak{b}$ of $B$ has
the  form $\mathfrak{b}=\bigoplus_{k\geq 0} \mathfrak{g}^{k, -k}$.
Then the Lie algebra $\mathfrak{v}_0$ of $V$ is
$$\mathfrak{v}_0=\mathfrak{g}_0\cap \mathfrak{b}=\mathfrak{g}_0\cap \mathfrak{b} \cap\bar{\mathfrak{b}}=\mathfrak{g}_0\cap \mathfrak{g}^{0, 0}.$$
With the above isomorphisms, the holomorphic tangent space of $\check{D}$ at the base point is naturally isomorphic to $\mathfrak{g}/\mathfrak{b}$.

Let us consider the nilpotent Lie subalgebra
$\mathfrak{n}_+:=\oplus_{k\geq 1}\mathfrak{g}^{-k,k}$.  Then one
gets the isomorphism $\mathfrak{g}/\mathfrak{b}\cong
\mathfrak{n}_+$. We denote the corresponding unipotent Lie group to be
$$N_+=\exp(\mathfrak{n}_+).$$

As $\text{Ad}(g)(\mathfrak{g}^{k, -k})$ is in  $\bigoplus_{i\geq
k}\mathfrak{g}^{i, -i} \text{ for each } g\in B,$ the subspace
$\mathfrak{b}\oplus \mathfrak{g}^{-1, 1}/\mathfrak{b}\subseteq
\mathfrak{g}/\mathfrak{b}$ defines an Ad$(B)$-invariant subspace. By
left translation via $G_{\mathbb{C}}$,
$\mathfrak{b}\oplus\mathfrak{g}^{-1,1}/\mathfrak{b}$ gives rise to a
$G_{\mathbb{C}}$-invariant holomorphic subbundle of the holomorphic
tangent bundle. It will be denoted by $\mathrm{T}^{1,0}_{h}\check{D}$,
and will be referred to as the horizontal tangent subbundle. One can
check that this construction does not depend on the choice of the
base point.

The horizontal tangent subbundle, restricted to $D$, determines a subbundle $\mathrm{T}_{h}^{1, 0}D$ of the holomorphic tangent bundle $\mathrm{T}^{1, 0}D$ of $D$.
The $G_{\mathbb{C}}$-invariance of $\mathrm{T}^{1, 0}_{h}\check{D}$ implies the $G_{\mathbb{R}}$-invariance of $\mathrm{T}^{1, 0}_{h}D$. Note that the horizontal
tangent subbundle $\mathrm{T}_{h}^{1, 0}D$ can also be constructed as the associated bundle of the principle bundle $V\to G_\mathbb{R} \to D$ with the adjoint
representation of $V$ on the space $\mathfrak{b}\oplus\mathfrak{g}^{-1,1}/\mathfrak{b}$.

Let $\mathscr{F}^{k}$, $0\le k \le n$ be the  Hodge bundles on $D$
with fibers $\mathscr{F}^{k}|_{s}=F_{s}^{k}$ for any $s\in D$. 
Let $\mathscr{H}^{p,q}=\mathscr{F}^{p}/\mathscr{F}^{p+1}$, $p+q=n$, be the quotient bundles such that $\mathscr{H}^{p,q}|_{s}={H}^{p,q}_{s}$.
As another
interpretation of the horizontal bundle in terms of the Hodge
bundles $\mathscr{F}^{k}\to \check{D}$, $0\le k \le n$, one has
\begin{align}\label{horizontal}
\mathrm{T}^{1, 0}_{h}\check{D}\simeq \mathrm{T}^{1, 0}\check{D}\cap \bigoplus_{k=1}^{n}\text{Hom}(\mathscr{F}^{k}/\mathscr{F}^{k+1}, \mathscr{F}^{k-1}/\mathscr{F}^{k}).
\end{align}

\begin{remark}
We remark that the elements in $N_+$ can be realized as nonsingular block lower triangular matrices with identity blocks in the diagonal; the elements in $B$ can be realized as nonsingular block upper triangular matrices.
If $c, c'\in N_+$ such that $cB=c'B$ in $\check{D}$, then $$c'^{-1}c\in N_+\cap B=\{I \},$$ i.e. $c=c'$. This means that the matrix representation in $N_+$ of the unipotent orbit $N_+(o)$ is unique. Therefore with the fixed base point $o\in \check{D}$, we can identify $N_+$ with its unipotent orbit $N_+(o)$ in $\check{D}$ by identifying an element $c\in N_+$ with $[c]=cB$ in $\check{D}$. Therefore our notation $N_+\subseteq\check{D}$ is meaningful. In particular, when the base point $o$ is in $D$, we have $N_+\cap D\subseteq D$.

\end{remark}


\begin{lemma}\label{transversal}Let $p\in\mathcal{T}$ be the base point with $\P(p)=\{F^n_p\subseteq F^{n-1}_p\subseteq \cdots\subseteq F^0_p\}.$ Let $q\in \mathcal{T}$ be any point with $\P(q)=\{F^n_q\subseteq F^{n-1}_q\subseteq \cdots\subseteq F^0_q\}$, then $\P(q)\in N_+$ if and only if $F^{k}_q$ is isomorphic to $F^k_p$ for all $0\leq k\leq n$.
\end{lemma}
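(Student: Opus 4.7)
The plan is to identify the abstract condition $\Phi(q) \in N_+$ with a concrete transversality condition by working in an adapted basis $\{\xi_j\}_{j=0}^{m-1}$ for the reference filtration $F^\bullet_p$. By the Remark preceding the Lemma, elements of $N_+$ are exactly the block lower triangular matrices with identity diagonal blocks in such a basis. The natural reading of ``$F^k_q$ is isomorphic to $F^k_p$'' in this context is that the projection $\pi_k \colon H = F^k_p \oplus \overline{F^{n-k+1}_p} \to F^k_p$ along the complementary summand restricts to an isomorphism $F^k_q \to F^k_p$, equivalently $F^k_q \cap \overline{F^{n-k+1}_p} = 0$.

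For the forward direction, I would write $\Phi(q) = cB$ with $c \in N_+$, so that $F^k_q = c F^k_p$. If $\xi_j$ lies in the $\beta$-th block, i.e.\ $\xi_j \in H^{n-\beta,\beta}_p$, then the block-triangular structure gives $c\xi_j = \xi_j + w_j$ with $w_j \in \bigoplus_{\alpha > \beta} H^{n-\alpha,\alpha}_p \subseteq \overline{F^{\beta+1}_p}$. Since $\overline{F^{\beta+1}_p} \subseteq \overline{F^{n-k+1}_p}$ whenever $k \le n-\beta$, the projection $\pi_k$ sends $c\xi_j \mapsto \xi_j$ for each $j < f^k$, which exhibits the required isomorphism $F^k_q \xrightarrow{\sim} F^k_p$.

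For the converse, suppose each $\pi_k$ is an isomorphism. For $j$ in the $\beta$-th block, define $\xi^q_j \in F^{n-\beta}_q$ as the unique preimage of $\xi_j$ under $\pi_{n-\beta}$, so that $\xi^q_j = \xi_j + v_j$ with $v_j \in \overline{F^{\beta+1}_p}$. For each $k$, the vectors $\{\xi^q_j : j < f^k\}$ all lie in $F^k_q$ (since $F^{n-\beta}_q \subseteq F^k_q$ whenever $\beta \le n-k$), are $f^k = \dim F^k_q$ in number, and project to a basis of $F^k_p$, hence form a basis of $F^k_q$. Writing $\xi^q_j = \sum_i c_{ij}\xi_i$, the relation $\xi^q_j - \xi_j \in \overline{F^{\beta+1}_p}$ forces $c_{jj}=1$ and $c_{ij}=0$ for all $i$ in blocks of index $\le \beta$ other than $j$ itself; equivalently, $c$ is block lower triangular with identity diagonal blocks. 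Thus $c \in N_+$ and $\Phi(q) = cB \in N_+$.

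The main subtlety is in the converse: one must verify that the lifts $\xi^q_j$, chosen through the smallest filtration level containing each block, are simultaneously compatible with every filtration level. This follows from the nested inclusions $\overline{F^{\beta+1}_p} \subseteq \overline{F^{n-k+1}_p}$ for $k \le n-\beta$, which ensure the same lifts project correctly under all relevant $\pi_k$, so that the single matrix $c$ satisfies $F^k_q = cF^k_p$ for every $k$ at once.
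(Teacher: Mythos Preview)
Your overall strategy is sound and closely parallels the paper's, but there is a concrete error in the inclusion you invoke. You claim $\overline{F^{\beta+1}_p} \subseteq \overline{F^{n-k+1}_p}$ whenever $k \le n-\beta$; in fact, since $F^a_p \subseteq F^b_p$ iff $a \ge b$, this inclusion holds iff $\beta+1 \ge n-k+1$, i.e.\ $\beta \ge n-k$, which is the \emph{opposite} of your condition. Consequently, your assertion in the forward direction that $\pi_k(c\xi_j) = \xi_j$ for every $j < f^k$ is false whenever $\xi_j$ lies in a block of index $\beta < n-k$: the tail $w_j$ may have nonzero components in blocks $\beta+1,\ldots,n-k$, all of which lie in $F^k_p$ and survive $\pi_k$. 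The same mistake recurs in your ``main subtlety'' paragraph for the converse.

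The fix is easy and your conclusions survive. In the forward direction, the matrix of $\pi_k|_{F^k_q}$ in the bases $\{c\xi_j\}_{j<f^k}$ and $\{\xi_j\}_{j<f^k}$ is precisely the upper-left $f^k \times f^k$ principal block of $c$, which is itself block lower triangular with identity diagonal, hence invertible. In the converse, your $c$ is indeed in $N_+$ as you argue, and $\{\xi^q_j : j < f^k\} = \{c\xi_j : j < f^k\}$ are $f^k$ vectors in $F^k_q$ that are linearly independent simply because $c$ is invertible; hence they span $F^k_q$, giving $cF^k_p = F^k_q$ for every $k$ without any appeal to the false inclusion. For comparison, the paper's proof sidesteps this issue by invoking the block LU-decomposition criterion directly: the transition matrix from an adapted basis at $p$ to one at $q$ factors as (unipotent block lower triangular)$\cdot$(block upper triangular) iff all its leading principal block minors are nonsingular, and those minors are exactly the matrices of the projections $\pi_k|_{F^k_q}$.
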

\begin{proof}For any $q\in \T$, we choose an arbitrary adapted basis $\{\zeta_{(0)}, \cdots, \zeta_{(n)}\}$ for the given Hodge filtration $\{F^{n}_q\subseteq F^{n-1}_q\subseteq\cdots\subseteq F^0_q\}$. We fix $\{\eta_{(0)}, \cdots, \eta_{(n)}\}$ as the adapted basis for the Hodge filtration $\{F^{n}_p \subseteq F^{n-1}_p\subseteq \cdots\subseteq F^0_p\}$ at the base point $p$. Let $(A^{(\alpha, \beta)}(q))_{0\le \alpha, \beta \le n}$ be the transition matrix between the basis $\{\eta_{(0)}, \cdots, \eta_{(n)}\}$ and $\{\zeta_{(0)}, \cdots, \zeta_{(n)}\}$ for the same vector space $H_\C$, where $A^{(\alpha, \beta)}(q)$ are the corresponding blocks. Then $$\P(q)\in N_+=N_+B/B\subseteq \check{D}$$ if and only if its matrix representation $(A^{(\alpha, \beta)}(q))_{0\le \alpha, \beta \le n}$ can be decomposed as $L(q)\cdot U(q)$, where $L(q)$ is a nonsingular block lower triangular matrix with identities in the diagonal blocks, and $U(q)$ is a nonsingular block upper triangular matrix.

By basic linear algebra, we know that $(A^{(\alpha, \beta)}(q))$ has such decomposition if and only if $\det(A^{(\alpha, \beta)}(q))_{0\leq \alpha, \beta \leq k}\neq 0$ for any $0\leq k\leq n$. In particular, we know that $(A^{(\alpha, \beta)}(q))_{0\leq \alpha, \beta\leq k}$ is the transition map between the bases of $F^k_p$ and $F^k_q$. Therefore, $\det((A^{(\alpha, \beta)}(q))_{0\leq \alpha, \beta\leq k})\neq 0$ if and only if $F^k_q$ is isomorphic to $F^k_p$.
\end{proof}

\begin{proposition}\label{codimension}
The subset $N_{+}$ is an open complex submanifold in $\check{D}$, and $\check{D}\setminus N_{+}$ is an analytic subvariety of $\check{D}$ with $\text{codim}_{\mathbb{C}}(\check{D}\setminus N_{+})\geq 1$.
\end{proposition}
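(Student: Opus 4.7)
The plan is to realize $N_+\subset\check D$ as the image of the orbit map at the base point, check that this map is an open holomorphic embedding, and then read off the analytic structure of the complement from the determinantal criterion in Lemma \ref{transversal}.

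First I would show openness. Consider the holomorphic orbit map $\phi\colon N_+\to\check D$, $n\mapsto nB$. Its differential at $e\in N_+$ is the composition $\mathfrak n_+\hookrightarrow\mathfrak g\twoheadrightarrow\mathfrak g/\mathfrak b$, which is an isomorphism because $\mathfrak g=\mathfrak n_+\oplus\mathfrak b$, an immediate consequence of the weight-zero Hodge decomposition of $\mathfrak g$ recalled just above the proposition. For any $n_0\in N_+$ one has $\phi\circ L_{n_0}=n_0\cdot\phi$, and since both $L_{n_0}$ and the $G_\C$-action on $\check D$ are biholomorphisms, this $N_+$-equivariance propagates the isomorphism of differentials to every point of $N_+$. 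Thus $\phi$ is a local biholomorphism; combined with the injectivity noted in the remark above the proposition (which boils down to $N_+\cap B=\{I\}$), $\phi$ becomes an open holomorphic embedding, so $N_+\subseteq\check D$ is an open complex submanifold.

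Next I would establish the analytic structure of the complement. Fix the adapted basis $\{\eta_{(0)},\dots,\eta_{(n)}\}$ at $o$, and let $(A^{(\alpha,\beta)}(q))_{0\le\alpha,\beta\le n}$ denote the transition matrix from $\{\eta_{(\alpha)}\}$ to an adapted basis of $F^\bullet_q$ as in the proof of Lemma \ref{transversal}. In a local holomorphic chart on $\check D$ its entries are holomorphic in $q$, and the lemma tells us that $q\in N_+$ precisely when all the principal block minors
\[
D_k(q):=\det\bigl((A^{(\alpha,\beta)}(q))_{0\le\alpha,\beta\le k}\bigr),\qquad 0\le k\le n,
\]
are nonzero. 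So locally $\check D\setminus N_+$ is cut out by the single holomorphic function $\prod_{k=0}^{n}D_k(q)$, and in particular is an analytic subvariety. Changing the adapted basis of $F^\bullet_q$ amounts to right multiplication by an element of $B$, which is block upper triangular and alters each $D_k$ only by a nonvanishing holomorphic factor, so the local zero loci glue to a well-defined analytic subvariety of $\check D$. Finally, since $\check D=G_\C/B$ is connected and $N_+$ is a nonempty open subset of it, the complement is a proper analytic subset, hence of complex codimension at least $1$.

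The main obstacle I anticipate is the patching, i.e.\ checking that the vanishing locus of the $D_k$ is an intrinsic condition on $q\in\check D$ and not an artifact of the auxiliary choice of adapted basis at $q$; everything else reduces either to a short Lie-theoretic computation or to a direct invocation of Lemma \ref{transversal}. This gluing step is where the block upper triangular structure of $B$ is genuinely used.
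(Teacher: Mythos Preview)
Your proof is correct and follows essentially the same route as the paper: both invoke Lemma \ref{transversal} to recognize $\check D\setminus N_+$ as the determinantal locus $\{\det(A^{(\alpha,\beta)}(q))_{0\le\alpha,\beta\le k}=0\text{ for some }k\}$, and then read off the codimension bound from $N_+$ being nonempty open in a connected space. The paper's version is considerably terser---it does not argue openness of $N_+$ separately (deducing it instead from the complement being analytic, hence closed), and it says nothing about the basis-independence of the vanishing locus; your direct Lie-theoretic argument via $\mathfrak g=\mathfrak n_+\oplus\mathfrak b$ and your observation about the block-upper-triangular ambiguity are legitimate additions that make the argument more self-contained, but they do not change the underlying strategy.
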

\begin{proof}
From Lemma \ref{transversal}, one can see that $\check{D}\setminus N_+\subseteq \check{D}$ is defined as an analytic subvariety by the equations
\begin{align*}
\{q\in \check{D} : \det ((A^{(\alpha, \beta)}(q))_{0\leq \alpha, \beta\leq k})=0\text{ for some } 0\leq k\leq n\}.
\end{align*}
Therefore $N_+$ is dense in $\check{D}$, and that $\check{D}\setminus N_+$ is an analytic subvariety,
which is closed in $\check{D}$ and with $\text{codim}_{\mathbb{C}}(\check{D}\backslash N_+)\geq 1$.
\end{proof}

\section{The expansions of local sections of the Hodge bundles}\label{Expansions of Local sections}
\subsection{Basic lemmas}\label{bl}
In this section, we consider the period map in general. Let $\Phi:\, S\to \Gamma \backslash D$ be the period map, which is equivalent to say that $\Phi$ is locally liftable and holomorphic, and $\Phi$ satisfies Griffiths transversality.
Let $U\subset S$ be any small enough open subset, and then
$$\Phi:\, U\to D$$
is the locally lifted period map.

In Definition \ref{blocks}, we have introduced the blocks of the adapted basis $\eta$ as
$$\eta=\{\eta_{(0)},\eta_{(1)},\cdots,\eta_{(n)}\},$$
where $\eta_{(\alpha)}=\{\eta_{f^{-\alpha +n+1}}, \cdots, \eta_{f^{-\alpha +n}-1}\}$ is the basis of $H^{n-\alpha,\alpha}(X_p)$ for $0\le \alpha \le n$.

By Proposition \ref{codimension}, we know that $N_+\cap D$ is an open dense subset of $D$ with $D\setminus (N_+\cap D)$ an analytic subset of $D$. 
Then we can shrink the small neighborhood $U$ of $S$ such that
$$\Phi(U)\subset N_+\cap D.$$
Then the lifted period map becomes
$$\Phi:\, U \to N_+\cap D.$$

For any $q\in U$, we can choose the matrix representation of the image $\Phi(q)$ in $N_+$ by $$\P(q)=(\Phi_{ij}(q))_{0\le i,j\le m-1}\in N_+\cap D.$$
Using the matrix representation, we can construct the holomorphic sections of the Hodge bundles $\mathscr{H}^{n-\alpha,\alpha}$ over $U$, for $0\le \alpha\le n$, as
\begin{eqnarray}\label{lm section}
\Omega_{(\alpha)}(q)&=&\sum_{\beta=0}^{n}\eta_{(\beta)}\cdot\Phi^{(\beta,\alpha)}(q) \nonumber \\
&=&\eta_{(\alpha)}+\sum_{\beta\ge \alpha+1}\eta_{(\beta)}\cdot\Phi^{(\beta,\alpha)}(q),
\end{eqnarray}
where $\eta_{(\beta)}\cdot\Phi^{(\beta,\alpha)}(q)$, as a product of two matrices, is a basis of $H^{n-\beta,\beta}(X_p)$, and $\Omega_{(\alpha)}(q)$ is a basis of $H^{n-\alpha,\alpha}(X_{q})$.

Let $\{V;z=(z_1,\cdots,z_N)\}$ be any small holomorphic coordinate around the base point $p$ with $z_\mu(p)=0$, $1\le \mu\le N$, such that $V\subset U$ and 
$$V=\{z\in \mathbb{C}^N:\, |z_{\mu}|<\epsilon,\ 1\le \mu\le N\}.$$
For $0\le \alpha,\beta \le n$, we define the matrix value functions as
$$\Phi^{(\alpha,\beta)}(z)=\Phi^{(\alpha,\beta)}(q), \text{ for } q\in U \text{ and } z(q)=z.$$

In the following, the derivatives of the blocks,
$$\frac{\partial \P^{(\alpha,\beta)}}{\partial z_\mu}(z), \text{ for } 0\le \alpha,\beta \le n,\ 1\le \mu\le N,$$
will be denoted to be the blocks of derivatives of its entries,
$$\frac{\partial \P^{(\alpha,\beta)}}{\partial z_\mu}(z)=\left(\frac{\partial \Phi_{ij}}{\partial z_\mu}(z)\right)_{f^{-\alpha+n+1}\leq i \leq f^{-\alpha+n}-1, \ f^{-\beta+n+1}\leq j\leq f^{-\beta+n}-1}.$$

\begin{lemma}\label{lm derivative lemma}
Let the notations be as above. Then we have that 
\begin{align}\label{lm derivative}
\frac{\partial \P^{(\alpha,\beta)}}{\partial z_\mu}(z)=\P^{(\alpha,\beta+1)}(z)\cdot \frac{\partial \P^{(\beta+1,\beta)}}{\partial z_\mu}(z)
\end{align}
for any $0\le \beta <\alpha\le n$ and $1\le \mu\le N$.
\end{lemma}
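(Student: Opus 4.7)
The plan is to analyze the matrix-valued function
$$M(z) := \P(z)^{-1}\,\frac{\d \P}{\d z_\mu}(z)$$
by combining two simultaneous constraints on the lifted period map: its image lies in $N_+$, and it satisfies Griffiths transversality. Once the block structure of $M(z)$ is pinned down, the stated identity drops out from the block expansion of $\d_\mu \P = \P\cdot M$.

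First, since $\P: U \to N_+\cap D$, the derivative $\d\P/\d z_\mu$ is tangent to $N_+$ at $\P(z)$, so left-translation by $\P(z)^{-1}$ places $M(z)$ in the Lie algebra $\mathfrak n_+ = \bigoplus_{k\ge 1}\mathfrak g^{-k,k}$. In the block notation of Definition \ref{blocks}, this says $M(z)$ is strictly block lower triangular, i.e.\ $M^{(\gamma,\beta)}(z)=0$ for $\gamma\le \beta$. Next, Griffiths transversality forces the image of $d\P$ to lie in the horizontal subbundle, whose fiber at $\P(z)$ is $L_{\P(z)*}\!\left((\mathfrak g^{-1,1}\oplus\mathfrak b)/\mathfrak b\right)$. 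Thus $M(z) \in \mathfrak g^{-1,1}\oplus\mathfrak b$, and intersecting with $\mathfrak n_+$ kills everything on and above the diagonal, yielding $M(z)\in \mathfrak g^{-1,1}$. Concretely, only the first-subdiagonal blocks $M^{(\beta+1,\beta)}(z)$ can be nonzero.

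Expanding the matrix equation $\d_\mu \P = \P\cdot M$ block-wise then gives
$$\frac{\d \P^{(\alpha,\beta)}}{\d z_\mu}(z) = \sum_{\gamma}\P^{(\alpha,\gamma)}(z)\, M^{(\gamma,\beta)}(z) = \P^{(\alpha,\beta+1)}(z)\, M^{(\beta+1,\beta)}(z),$$
since only the $\gamma=\beta+1$ summand survives. Setting $\alpha=\beta+1$ and using that diagonal blocks of an $N_+$-element are identities, so $\P^{(\beta+1,\beta+1)}(z)=I$, identifies $M^{(\beta+1,\beta)}(z)$ with $\d \P^{(\beta+1,\beta)}/\d z_\mu(z)$; substituting back yields the claimed formula for all $0\le\beta<\alpha\le n$.

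The only substantive step is the collapse of the two a priori distinct conditions ``horizontal'' and ``$N_+$-valued'' into the single algebraic condition $M(z)\in \mathfrak g^{-1,1}$; everything after is block-matrix accounting. I do not expect any hidden obstacle, since both constraints are by construction built into the setup of Section \ref{Lie}.
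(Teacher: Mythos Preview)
Your proof is correct. Both you and the paper establish the identity by showing that the derivative of $\P$ is concentrated on the first block-subdiagonal, but the packaging differs. The paper works column by column: it differentiates the section $\Omega_{(\beta)}(z)=\eta_{(\beta)}+\sum_{\gamma>\beta}\eta_{(\gamma)}\P^{(\gamma,\beta)}(z)$, invokes Griffiths transversality to write $\d_\mu\Omega_{(\beta)}=\Omega_{(\beta)}A_\mu+\Omega_{(\beta+1)}B_\mu$, and then compares types against the fixed adapted basis $\eta$ to force $A_\mu=0$ and identify $B_\mu=\d_\mu\P^{(\beta+1,\beta)}$. You bundle all columns at once through the Maurer--Cartan form $M=\P^{-1}\d_\mu\P$ and replace the type-comparison with the single Lie-algebraic observation $\mathfrak n_+\cap(\mathfrak g^{-1,1}\oplus\mathfrak b)=\mathfrak g^{-1,1}$. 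Your route is shorter and makes the Lie-theoretic apparatus of Section~\ref{Lie} do the work directly; the paper's section-wise argument is more concrete and dovetails with its later use of the $\Omega_{(\beta)}(z)$ in Theorem~\ref{canonical expansion theorem}. The underlying content is the same.
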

\begin{proof}
The main idea of the proof is to understand the Griffiths transversality in terms of the matrix representations of the image of the period map in $N_+$.

Let us consider the holomorphic sections $$\Omega_{(\beta)}(z)=\Omega_{(\beta)}(q), \text{ for } q\in U \text{ and } z(q)=z$$ of the Hodge bundles $\mathscr{H}^{n-\beta,\beta}$ in \eqref{lm section}. By Griffiths transversality,  especially the computations in Page 813 of \cite{Griffiths2}, we have that 
$$\frac{\partial \Omega_{(\beta)}}{\partial z_\mu}(z)$$
lies in ${H}^{n-\beta,\beta}(X_z)\oplus {H}^{n-\beta-1,\beta+1}(X_z)$, where $X_z=X_q$ for $z(q)=z$.

By the constructions of the holomorphic sections in \eqref{lm section}, we know that 
$$\Omega_{(\beta)}(z) \text{ and }\Omega_{(\beta+1)}(z)$$
are bases of $H^{n-\beta,\beta}(X_z)$ and ${H}^{n-\beta-1,\beta+1}(X_z)$ respectively. Therefore, there exist matrices $A_\mu(z)$ and $B_{\mu}(z)$ such that 
\begin{eqnarray}\label{1}
\frac{\partial \Omega_{(\beta)}}{\partial z_\mu}(z)&=&\Omega_{(\beta)}(z)A_{\mu}(z)+\Omega_{(\beta+1)}(z) \cdot B_\mu(z) \nonumber \\
&\equiv&\eta_{(\beta)}A_{\mu}(z) \text{ mod } \bigoplus_{\gamma\ge 1}{H}^{n-\beta-\gamma,\beta+\gamma}(X_p),
\end{eqnarray}
where the inclusion \eqref{1} is deduced by comparing the types in the holomorphic sections in \eqref{lm section}.

Again, by looking at the holomorphic sections in \eqref{lm section}, we have that 
\begin{eqnarray}\label{2}
\frac{\partial \Omega_{(\beta)}}{\partial z_\mu}(z)&=&\sum_{\alpha\ge \beta+1}\eta_{(\alpha)}\cdot \frac{\partial \Phi^{(\alpha,\beta)}}{\partial z_\mu}(z) \nonumber \\
&=& \eta_{(\beta+1)}\cdot\frac{\partial\Phi^{(\beta+1,\beta)}}{\partial z_\mu}(z)+\sum_{\alpha\ge \beta+2}\eta_{(\alpha)}\cdot \frac{\partial \Phi^{(\alpha,\beta)}}{\partial z_\mu}(z)\nonumber\\
&\equiv&0 \text{ mod } \bigoplus_{\gamma\ge 1}{H}^{n-\beta-\gamma,\beta+\gamma}(X_p).
\end{eqnarray}

By comparing equations \eqref{1} and \eqref{2}, we see that $A_{\mu}(z)=0$, and hence 
\begin{eqnarray*}
\frac{\partial \Omega_{(\beta)}}{\partial z_\mu}(z)&=&\Omega_{(\beta+1)}(z) \cdot B_\mu(z) \\
&=&\left(\eta_{(\beta+1)}+\sum_{\alpha\ge \beta+2}\eta_{(\alpha)}\cdot\Phi^{(\alpha,\beta+1)}(z)\right)\cdot B_\mu(z)\\
&=& \eta_{(\beta+1)}\cdot B_\mu(z)+\sum_{\alpha\ge \beta+2}\eta_{(\alpha)}\cdot\Phi^{(\alpha,\beta+1)}(z)\cdot B_\mu(z).
\end{eqnarray*}
%

By comparing types again, we have that
\begin{align*}
B_\mu(z)= \frac{\partial\Phi^{(\beta+1,\beta)}}{\partial z_\mu}(z)
\end{align*}
and
\begin{align}\label{lm derivative'}
\frac{\partial\Phi^{(\alpha,\beta)}}{\partial z_\mu}(z)=\Phi^{(\alpha,\beta+1)}(z)\cdot \frac{\partial \Phi^{(\beta+1,\beta)}}{\partial z_\mu}(z),
\end{align}
for $\alpha \ge \beta+2$. Since $\Phi^{(\beta+1,\beta+1)}(z)$ is identity matrix, equation \eqref{lm derivative'} is satisfied trivially for $\alpha=\beta+1$. Therefore, we have proved equation \eqref{lm derivative}.
\end{proof}

\begin{corollary}\label{omega stable}
Under conditions (i) and (ii) of polarized manifolds of ball type in Definition \ref{ball type}, the subgroup
\begin{equation}\label{di 1}
\C\{\Omega, \theta_{1}\lrcorner \Omega, \cdots, \theta_{N}\lrcorner \Omega\}\subset F^{k-1}(X)=H_{\mathrm{pr}}^{k,n-k}(X) \oplus H_{\mathrm{pr}}^{k-1,n-k+1}(X)
\end{equation}
is stable under deformation as a subgroup of $H_{\mathrm{pr}}^{n}(X,\C)$.
\end{corollary}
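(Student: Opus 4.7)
The plan is to produce $N+1$ explicit holomorphic sections of the topologically trivialized local system whose values at the central point span $\mathbb{C}\{\Omega,\theta_1\lrcorner\Omega,\ldots,\theta_N\lrcorner\Omega\}$; their span at nearby points of the deformation will serve as the required holomorphic family $H_0^n(X_t,\mathbb{C})$ in the sense of Definition \ref{deformation invariant}. Condition (i) gives $F^k=H^{k,n-k}_{\mathrm{pr}}$, so $\mathcal{F}^k=\mathscr{H}^{k,n-k}$ is itself the first non-vanishing Hodge bundle.

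First I would lift the period map of a given deformation $\pi\colon\mathcal{X}\to\Delta$ into $N_+\cap D$ using Proposition \ref{codimension}, fix the adapted basis $\eta=\{\eta_{(0)},\ldots,\eta_{(n)}\}$ at the base point, and write $\Omega=\sum_j c_j\,\eta_{(n-k),j}$. Using the explicit sections $\Omega_{(n-k)}(z)$ of $\mathscr{H}^{k,n-k}$ built from the $N_+$-matrix representation in equation \eqref{lm section}, extend $\Omega$ to the holomorphic section $\Omega(z):=\sum_j c_j\,\Omega_{(n-k),j}(z)$ of $\mathcal{F}^k$. Applying Lemma \ref{lm derivative lemma} at $z=0$ together with $\Phi(0)=I$, which forces $\Phi^{(\alpha,n-k+1)}(0)=0$ for every $\alpha>n-k+1$, all blocks $\partial_{z_\mu}\Phi^{(\alpha,n-k)}(0)$ with $\alpha\ge n-k+2$ vanish, so
\[
\frac{\partial\Omega}{\partial z_\mu}\bigg|_{z=0}=\eta_{(n-k+1)}\cdot\frac{\partial\Phi^{(n-k+1,n-k)}}{\partial z_\mu}(0)\cdot(c_j)_j\;\in\;H^{k-1,n-k+1}_{\mathrm{pr}}(X).
\]
By Griffiths transversality the reduction of this derivative modulo $F^k$ is the Kodaira--Spencer contraction of $\Omega$ by the class of $\partial/\partial z_\mu$; since it already has no $F^k$-component, the derivative equals $\theta_\mu\lrcorner\Omega$ on the nose, where I choose coordinates so that $\{\partial/\partial z_\mu|_0\}_{\mu=1}^N$ is sent to the prescribed basis $\{\theta_\mu\}$ of $H^1(X,\Theta_X)$ under Kodaira--Spencer.

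Setting $\alpha_0(z):=\Omega(z)$ and $\alpha_\mu(z):=\partial_{z_\mu}\Omega(z)$ for $1\le\mu\le N$ produces $N+1$ holomorphic sections of $\mathcal{F}^{k-1}$ which, via the topological trivialization $F_\sigma$, become holomorphic maps $\Delta\to H^n_{\mathrm{pr}}(X,\mathbb{C})$. Their values at $z=0$ are $\Omega$ and $\theta_\mu\lrcorner\Omega$; these are linearly independent by condition $(*_1)$ combined with the fact that $H^{k,n-k}_{\mathrm{pr}}\cap H^{k-1,n-k+1}_{\mathrm{pr}}=0$. The determinant of the transition matrix to any adapted basis is therefore a holomorphic function non-vanishing at $0$, hence non-vanishing on a smaller polydisc, which I may take as $\Delta$ after shrinking. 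Defining $H_0^n(X_t,\mathbb{C}):=\mathbb{C}\{\alpha_0(t),\ldots,\alpha_N(t)\}$ then gives the desired holomorphic trivialization, establishing stability under deformation.

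The one subtle point is arranging $\partial_{z_\mu}\Omega|_{z=0}$ to land squarely inside $H^{k-1,n-k+1}_{\mathrm{pr}}(X)$ rather than merely inside $F^{k-1}$, so that it equals $\theta_\mu\lrcorner\Omega$ without further correction; the normalized $N_+$-lift, via Lemma \ref{lm derivative lemma} and the identity $\Phi(0)=I$, handles this automatically. Note that condition $(*_2)$ is not invoked here and presumably enters only later, when upgrading ``stable'' to ``deformation invariant'' via Theorem \ref{intr linear expansion}.
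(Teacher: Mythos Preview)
Your argument is correct and follows essentially the same line as the paper's own proof: extend $\Omega$ to a holomorphic section $\Omega(z)$ of $\mathscr{F}^k$ via the $N_+$-normalized sections of \eqref{lm section}, and take $\Omega(z)$ together with its partial derivatives as the required holomorphic frame. Your use of Lemma~\ref{lm derivative lemma} at $z=0$ to force the derivative into $H^{k-1,n-k+1}_{\mathrm{pr}}(X)$ (rather than merely $F^{k-1}$) is exactly the ``comparing types'' step the paper invokes, just made more explicit; and your observation that $(*_2)$ plays no role here is also correct.

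One small point where the paper goes slightly further: it argues that $\partial_{t_i}\Omega(t)=\theta_i(t)\lrcorner\Omega(t)$ for \emph{all} $t$ near $0$ (this follows from the identity $\partial_{z_\mu}\Omega_{(\beta)}(z)=\Omega_{(\beta+1)}(z)\cdot B_\mu(z)$ established inside the proof of Lemma~\ref{lm derivative lemma}, not only its specialization at $z=0$), so that the span at $t$ is intrinsically $\C\{\Omega(t),\theta_1(t)\lrcorner\Omega(t),\ldots,\theta_N(t)\lrcorner\Omega(t)\}$ rather than merely an abstractly defined $H_0^n(X_t,\C)$. Given how Definition~\ref{deformation invariant} is phrased, your version already suffices, but the stronger identification is what makes the subgroup at $t$ recognizable as ``the same'' object for $X_t$.
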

\begin{proof}
Let the notations be as in Definition \ref{deformation invariant}.

By the construction in \eqref{lm section}, for $\Omega \in H_{\mathrm{pr}}^{k,n-k}(X)$, we can construct a holomorphic section $\Omega(t) \in H_{\mathrm{pr}}^{k,n-k}(X_{t})$ such that $\Omega(0)=\Omega$ and 
$$\frac{\partial }{\partial t_{i}}\Omega(t) \in  F^{k-1}(X_{t})/F^{k}(X_{t}), \, 1\le i\le N.$$ 
Since Condition ($*_{1}$) of (ii) in Definition \ref{ball type} is an open condition, we have that $\Omega(t)$ still satisfy Condition ($*_{1}$) for $t$ small enough.

By Griffiths transversality and by comparing types, we have that $$\frac{\partial }{\partial t_{i}}\Omega(t)=\theta_{i}(t)\lrcorner \Omega(t)$$
where $\theta_{i}(t)$ is the image of $\frac{\partial }{\partial t_{i}}|_{t}$ under the Kodaira-Spencer map at $t$. Therefore $\frac{\partial }{\partial t_{i}}\Omega(t)$ is a section of $\C\{\theta_{i}(t)\lrcorner \Omega(t)\}_{i=1}^{N}\subset F^{k-1}(X_{t})/F^{k}(X_{t})$. Finally, we have constructed holomorphic sections $\Omega(t), \frac{\partial }{\partial t_{1}}\Omega(t), \cdots \frac{\partial }{\partial t_{N}}\Omega(t)$ of $F^{k-1}(X_{t})$ such that they spans $\C$-linearly the subgroup in \eqref{di 1} for $X_{t}$.
\end{proof}

\begin{proposition}\label{lm expansion proposition}
Let the notations be as above. Then we have the following expansions of the blocks,
\begin{align}\label{lm expansion}
 \P^{(\alpha, \beta)}(z)=O(|z|^{\alpha-\beta}), \text{ for } 0\le \beta \le \alpha\le n 
\end{align}
\end{proposition}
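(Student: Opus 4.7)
The plan is to prove the estimate by induction on the gap $d = \alpha - \beta \ge 0$, using the derivative identity in Lemma \ref{lm derivative lemma} as the main structural input. Throughout, I will use that $\Phi(z)$ takes values in $N_+$ with $\Phi(0) = I$, so each diagonal block $\Phi^{(\alpha,\alpha)}(z)$ is the identity matrix, and each strictly lower block $\Phi^{(\alpha,\beta)}(z)$ (with $\alpha>\beta$) vanishes at $z=0$.

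First I would dispose of the base cases. When $d=0$, the block $\Phi^{(\alpha,\alpha)}(z) = I$ is identically the identity, so the $O(1) = O(|z|^0)$ estimate is automatic. When $d=1$, the block $\Phi^{(\beta+1,\beta)}(z)$ is holomorphic in $z$ and vanishes at the base point, so its Taylor expansion around $z=0$ gives $\Phi^{(\beta+1,\beta)}(z) = O(|z|)$ immediately.

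For the inductive step I fix $d \ge 2$ and assume the estimate holds for all pairs $(\alpha', \beta')$ with $\alpha' - \beta' < d$. Given a pair $(\alpha, \beta)$ with $\alpha - \beta = d$, Lemma \ref{lm derivative lemma} gives
\begin{equation*}
\frac{\partial \Phi^{(\alpha,\beta)}}{\partial z_\mu}(z) = \Phi^{(\alpha,\beta+1)}(z) \cdot \frac{\partial \Phi^{(\beta+1,\beta)}}{\partial z_\mu}(z).
\end{equation*}
By the inductive hypothesis applied to the pair $(\alpha, \beta+1)$, whose gap is $d-1$, the first factor is $O(|z|^{d-1})$. The second factor is a holomorphic matrix-valued function on $V$, hence bounded on a neighborhood of $0$, i.e.\ $O(1)$. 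Multiplying gives
\begin{equation*}
\frac{\partial \Phi^{(\alpha,\beta)}}{\partial z_\mu}(z) = O(|z|^{d-1}) \quad \text{for every } 1 \le \mu \le N.
\end{equation*}
Since $\Phi^{(\alpha,\beta)}(0) = 0$ (because $\alpha > \beta$ and $\Phi(0) = I$), integrating the partial derivatives along a straight line segment from $0$ to $z$ yields $\Phi^{(\alpha,\beta)}(z) = O(|z|^d)$, which is the desired estimate.

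I do not expect any serious obstacle: the only nontrivial input is Lemma \ref{lm derivative lemma}, which encodes Griffiths transversality in terms of the $N_+$ matrix blocks, and the key observation is that this identity shears one order of vanishing onto the derivative. The choice of induction parameter (the gap $\alpha - \beta$ rather than the column index $\beta$ or the row index $\alpha$ alone) is what makes the induction close cleanly, because the derivative formula directly relates $\Phi^{(\alpha,\beta)}$ to $\Phi^{(\alpha,\beta+1)}$, which has one smaller gap.
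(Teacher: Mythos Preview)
Your proposal is correct and follows essentially the same approach as the paper's own proof: induction on the gap $\alpha-\beta$, base cases handled via $\Phi(0)=I$, and the inductive step driven by Lemma \ref{lm derivative lemma} together with the boundedness of $\partial\Phi^{(\beta+1,\beta)}/\partial z_\mu$. The only cosmetic difference is that you make the integration step explicit, whereas the paper simply asserts that a derivative estimate of order $O(|z|^{s})$ together with vanishing at the origin yields $O(|z|^{s+1})$.
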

\begin{proof}
Equation \eqref{lm expansion} is trivial for $\alpha=\beta$. 

At the base point $p$, the matrix representation in $N_+$ of $\P(p)$ is the identity matrix, and hence $\P^{(\alpha, \beta)}(0)=0$ for $0\le \beta < \alpha\le n$. That implies that the expansions of the blocks $\P^{(\alpha, \beta)}(z)$ have no constant terms, and equation \eqref{lm expansion} holds for $\alpha=\beta+1$.

Now we prove the proposition inductively. We may assume that equation \eqref{lm expansion} holds for $\alpha=\beta+s$, i.e. $\P^{(\alpha, \beta)}(z)=O(|z|^{s})$, where $1\le s < n$. 

Let us consider the case for $\alpha=\beta+s+1$. By the equation \eqref{lm derivative} in Lemma \ref{lm derivative lemma}, we have
$$\frac{\partial \P^{(\alpha, \beta)}}{\partial z_\mu}(z)=\P^{(\alpha, \beta+1)}(z)\cdot \frac{\partial \P^{(\beta+1, \beta)}}{\partial z_\mu}(z),$$
where $\P^{(\alpha, \beta+1)}(z)=O(|z|^{s})$ by induction, and, from the discussion at the beginning of the proof, $\frac{\partial \P^{(\beta+1, \beta)}}{\partial z_\mu}(z)$ is bounded when $z$ is near the origin. Therefore we conclude that 
$$\frac{\partial \P^{(\alpha, \beta)}}{\partial z_\mu}(z)=O(|z|^{s}),$$
which implies that 
$$\P^{(\alpha, \beta)}(z)=O(|z|^{s+1}).$$

At last we finish the proof by induction.
\end{proof}


\subsection{Calabi--Yau type manifolds}\label{GCY}
In this section, we first define the notion of Calabi--Yau type manifolds.

\begin{definition}\label{defn of gcyt}
Let $X$ be a projective manifold with $\dim_{\mathbb{C}}X=n$. We call $X$ a Calabi--Yau type manifold if it satisfies the following conditions:
\begin{enumerate}
\item There exists some $k$, satisfying $[n/2]< k\leq n$, such that
\begin{align*}
H_{\mathrm{pr}}^{l,n-l}(X)=0 \text{ for } l>k,\text{ and }\dim_{\mathbb{C}}H_{\mathrm{pr}}^{k,n-k}(X)=1;
\end{align*}
\item For any generator $[\Omega]\in H_{\mathrm{pr}}^{k,n-k}(X)$, the contraction map
\begin{align*}
\lrcorner:\, H^{1}(X,\Theta_X)\to H_{\mathrm{pr}}^{k-1,n-k+1}(X), \quad\quad
[\phi]\mapsto [\phi\lrcorner\Omega]
\end{align*}
is an isomorphism.
\label{condition2}
\end{enumerate}
\end{definition} 


The motivation of introducing the notion of Calabi--Yau type manifolds is to simplify the proof of the expansions of local sections of the Hodge bundles for polarized manifolds of ball type. 

Note that, with condition (i) and ($*_{1}$) of (ii) in Definition \ref{ball type}, a polarized manifold $X$ such that $\mathrm{dim}_{\C}H^{k,n-k}(X)=1$ is a Calabi--Yau type manifold.

Let $f:\,\mathcal{X}\to S$ be a family of Calabi--Yau type manifolds over the complex manifold $S$. 
We are considering the local properties of the period map in this section.
Therefore, we take a small enough neighborhood $U$ of $S$, such that the period map 
$$\Phi:\, U \to D$$
is well-defined.
The period map $\Phi$ then gives the Hodge bundles $$\mathscr{H}^{l,n-l}:\,= \mathscr{F}^{l}/\mathscr{F}^{l+1}$$ over $U$, for $n-k\le l\le k$.

Fix $p\in U\subset S$ as the base point, we can choose any holomorphic coordinate $\{U;z\}$ around the base point $p$ such that $z(p)=0$. Then the basis $\{\theta_{i}\}_{1\le i\le m}$ of $H^{1}(X,\Theta_{X})$ can be given by the images of the holomorphic vectors
$$\frac{\partial}{\partial z_i}\bigg|_0\in \mathrm{T}_p U,\ 1\le i\le N,$$
under the Kodaira-Spencer map.
By equation (2) of Definition \ref{defn of gcyt}, $\{\theta_{i}\lrcorner \Omega_0\}_{1\le i\le N}$ is linearly independent in $H_{\mathrm{pr}}^{k-1,n-k+1}(X)$.

Let us study the local section of the Hodge bundle $\mathscr{H}^{k,n-k}$ for Calabi--Yau type manifolds.

We fix the base point $p$ in $U$. Due to Definition \ref{defn of gcyt}, the Hodge structure of Calabi--Yau type manifold $X$ is of the form
$$H=H^{k,n-k}\oplus H^{k-1,n-k+1}\oplus \cdots \oplus H^{n-k,k},$$
where $H^{p,n-p}\simeq H_{\mathrm{pr}}^{p,n-p}(X)$, for $n-k\le p\le k$.
Then the adapted basis $\eta$ as given in Definition \ref{blocks} is of the form
$$\eta=(0,\cdots,0,\eta_{(n-k)},\eta_{(n-k+1)},\cdots, \eta_{(k)},0,\cdots, 0).$$
The corresponding matrices in $N_{+}\cap D$ is of the form 
\begin{align*}
\Phi=\left[\begin{array}[c]{ccccccc}
\ddots&\vdots&\vdots&  & \vdots &   \vdots\\ 
\cdots & I &0&\cdots &0 &\cdots   \\ 
\cdots & \Phi^{(n-k+1,n-k)} & I          & \cdots     &0      & \cdots  \\ 
\cdots & \vdots & \vdots & \ddots     &\vdots       & \cdots  \\
\cdots     & \Phi^{(k,n-k)}     &  \Phi^{(k,n-k+1)}   &  \cdots & I &  \cdots \\
\vdots & \vdots  & \vdots  & \cdots     &\vdots       & \ddots  
\end{array}\right].
\end{align*}
Since $\dim_{\mathbb{C}}H^{k,n-k}(X)=1$, the block $\Phi^{(n-k+1,n-k)}$ is a column vector.

From the Griffiths transversality, we know that the image of the tangent map of the period map, 
$$d\P\left(\frac{\partial }{\partial z_\mu}\bigg|_z\right), \text{ for each } 1\le \mu \le N,$$
lies in the horizontal subspace
$$\bigoplus_{1\le l\le 2k-n}\mathrm{Hom}(F_z^{k-l+1}, F_z^{k-l}/F_z^{k-l+1}).$$
By the proof of Lemma \ref{lm derivative lemma}, we know that the component of 
$d\P(\frac{\partial }{\partial z_\mu}|_z)$ in the subspace $\mathrm{Hom}(F_z^{k}, F_z^{k-1}/F_z^{k})$ is identified with
$$\frac{\partial\Phi^{(n-k+1,n-k)}}{\partial z_\mu}(z).$$

The properties of the Hodge structure of Calabi--Yau type manifolds, as given in (2) of Definition \ref{defn of gcyt}, imply that $d\P(\frac{\partial }{\partial z_\mu}|_z)$ is isomorphic to $\mathrm{Hom}(F_z^{k}, F_z^{k-1}/F_z^{k})$.

Therefore, if we define
\begin{equation}\label{canonical coordinate}
z^{c}=(z^{c}_{1},\cdots, z^{c}_{N})=\big(\Phi^{(n-k+1,n-k)}(z)\big)^{T},
\end{equation}
then the Jacobian 
$$\frac{\partial z^{c}}{\partial z}$$
is non-degenerate.
This implies that $\{U;z^c=(z_1^c,\cdots,z_N^c)\}$ is also a coordinate around the base point $p$.

\begin{definition}\label{canonical coordinate definition}
We call the coordinate $\{U;z^c=(z_1^c,\cdots,z_N^c)\}$, defined by the equations \eqref{canonical coordinate}, the canonical coordinate around the base point $p$.
\end{definition}


Let us introduce the Hodge structre of Tate, which is defined by
$$T=T^{1,1}$$
such that $T^{1,1}\simeq \C$. Define $T(-1)$ to be dual of $T$, and
$$T(m)=T^{\otimes m}=T^{m,m},\, T(-m)=T(-1)^{\otimes m}=T^{-m,-m},$$
for positive integer $m$.

\begin{remark}\label{reduction}
Let $D$ be the period domain for Calabi--Yau type manifolds.
We can define the period domain
$$D\otimes T(k-n)=\{H\otimes T(k-n)\,:H\in D\}.$$
Let $H\in D$ such that
$$H=H^{k,n-k}\oplus H^{k-1,n-k+1}\oplus \cdots \oplus H^{n-k,k},$$
where $\dim_{\C}H^{k,n-k}=\dim_{\C}H^{n-k,k}=1$.
Define $$H'^{p,q}=H^{p+n-k,q+n-k}\otimes T^{k-n,k-n}.$$
Then  
\begin{eqnarray*}
H\otimes T(k-n)&=& H^{k,n-k}\oplus H^{k-1,n-k+1}\oplus \cdots \oplus H^{n-k,k}\\
&=& (H^{k,n-k}\otimes T^{k-n,k-n})\oplus (H^{k-1,n-k+1}\otimes T^{k-n,k-n})\oplus \cdots \\
&&\oplus (H^{n-k,k}\otimes T^{k-n,k-n})\\
&=& H'^{2k-n,0} \oplus H'^{2k-n-1,1}\oplus \cdots \oplus H'^{0,2k-n},
\end{eqnarray*}
where $\dim_{\C}H'^{2k-n,0}=\dim_{\C}H'^{0,2k-n}=1$.
Therefore $D\otimes T(k-n)$ is the period domain for Calabi--Yau manifolds of weight $2k-n$.

Let $N'_{+}$ be the corresponding unipotent group for the period domain $D\otimes T(k-n)$.
Then the corresponding matrices in $N'_{+}\cap (D\otimes T(k-n))$ is of the form 
\begin{align*}
\Psi'=\left[\begin{array}[c]{ccccccc}
I &0&\cdots &0  \\ 
\Psi'^{(1,0)} & I          & \cdots     &0      \\ 
\vdots & \vdots & \ddots     &\vdots        \\
\Psi'^{(2k-n,0)}     &  \Psi'^{(2k-n,1)}   &  \cdots & I \\
\end{array}\right],
\end{align*}
such that $\Psi'^{(m,l)}=\Psi^{(m+n-k,l+n-k)}$.

Since we are considering the local expansions of the sections of Hodge bundles, the results are the same if we tensor the period domain globally by $T(k-n)$.
Hence we only need to prove the following results for Calabi--Yau manifolds.
\end{remark}

\begin{theorem}\label{canonical expansion theorem}
Let $X_{p}$ be a Calabi--Yau type manifold.
Under the canonical coordinate $\{U;z^c\}$ around the base point $p$, we have the following expansion of the holomorphic section $\Omega(z^c)$ of the Hodge bundle $\mathcal{H}^{k,n-k}$,
$$\Omega(z^c)=\Omega_0+\sum_{1\le i\le N}[\theta^c_i \lrcorner\Omega_0]z_i^c+\sum_{1\le i,j\le N}[\theta^c_i \lrcorner\theta^c_j \lrcorner\Omega_0]z_i^cz_j^c+O(|z^c|^3),$$
where the high order terms $O(|z^c|^3)$ lie in 
$$\bigoplus_{l\ge 2}H^{k-l,n-k+l}(X_p).$$
Here $[\ ]$ denotes the cohomological class.
\end{theorem}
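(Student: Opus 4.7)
The plan is to reduce to the Calabi--Yau case and then extract the zeroth, first, and second order terms of $\Omega(z^c)$ by reading off the blocks $\Phi^{(\alpha,0)}$ of the period matrix in the canonical coordinate, using Lemma \ref{lm derivative lemma} and Proposition \ref{lm expansion proposition} to control everything of order $\geq 3$.

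First, by Remark \ref{reduction}, tensoring with $T(k-n)$ reduces the problem to the genuinely Calabi--Yau case where $\dim_{\C}H^{n,0}=1$, so that $\eta_{(0)}=\Omega_0$ and the section reads
\begin{align*}
\Omega(z^c)=\eta_{(0)}+\sum_{\alpha\ge 1}\eta_{(\alpha)}\cdot \Phi^{(\alpha,0)}(z^c).
\end{align*}
By Proposition \ref{lm expansion proposition}, $\Phi^{(\alpha,0)}(z^c)=O(|z^c|^{\alpha})$, so only the terms $\alpha=1,2$ contribute up to quadratic order, with the remainder lying in $\bigoplus_{l\ge 2}H^{k-l,n-k+l}(X_p)$.

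For the linear term: by definition of the canonical coordinate, $\Phi^{(1,0)}(z^c)=(z^c)^T$, which gives $\eta_{(1)}\cdot \Phi^{(1,0)}(z^c)=\sum_{i}\eta_{(1),i}\,z^c_i$. Computing $\partial\Omega/\partial z^c_i$ at $0$ via Lemma \ref{lm derivative lemma} (which kills all the $\alpha\ge 2$ contributions because $\Phi^{(\alpha,1)}(0)=0$) yields $\eta_{(1),i}=\partial\Omega/\partial z^c_i(0)$, and Kodaira--Spencer together with the Calabi--Yau type condition identifies this with the class $[\theta^c_i\lrcorner\Omega_0]$ in $H^{k-1,n-k+1}(X_p)$.

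For the quadratic term, the idea is to apply Lemma \ref{lm derivative lemma} twice. From
\begin{align*}
\frac{\partial \Phi^{(2,0)}}{\partial z^c_\mu}(z^c)=\Phi^{(2,1)}(z^c)\cdot \frac{\partial \Phi^{(1,0)}}{\partial z^c_\mu}(z^c)=\Phi^{(2,1)}_{\,\cdot\,,\mu}(z^c),
\end{align*}
and the vanishing $\Phi^{(2,1)}(0)=0$, I get
\begin{align*}
\frac{\partial^2\Phi^{(2,0)}}{\partial z^c_\mu\partial z^c_\nu}(0)=\frac{\partial \Phi^{(2,1)}_{\,\cdot\,,\mu}}{\partial z^c_\nu}(0).
\end{align*}
Then I apply Griffiths transversality to the section $\Omega_{(1),\mu}(z^c)=\eta_{(1),\mu}+\sum_{\alpha\ge 2}\eta_{(\alpha)}\Phi^{(\alpha,1)}_{\,\cdot\,,\mu}(z^c)$ of $\mathcal H^{k-1,n-k+1}$. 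The proof of Lemma \ref{lm derivative lemma} applied at level $\beta=1$ shows $\partial \Omega_{(1),\mu}/\partial z^c_\nu(0)=\eta_{(2)}\cdot \partial \Phi^{(2,1)}_{\,\cdot\,,\mu}/\partial z^c_\nu(0)$, while Kodaira--Spencer on this same section at the base point yields $\partial \Omega_{(1),\mu}/\partial z^c_\nu(0)=[\theta^c_\nu\lrcorner \theta^c_\mu\lrcorner \Omega_0]$ in $H^{k-2,n-k+2}(X_p)$. Combining these two identifications gives the desired quadratic coefficient, and the symmetry of the iterated contraction in $\mu,\nu$ makes the resulting Taylor expansion match the stated formula.

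The main technical point is the two-step Griffiths transversality argument used to translate the matrix-entry computation of $\partial^2\Phi^{(2,0)}/\partial z^c_\mu\partial z^c_\nu(0)$ into the iterated cohomological contraction $[\theta^c_\mu\lrcorner \theta^c_\nu\lrcorner \Omega_0]$; once this bridge is established, the remaining bookkeeping and the identification of $O(|z^c|^3)$ remainders follow directly from Proposition \ref{lm expansion proposition}.
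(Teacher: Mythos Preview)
Your proof is correct and covers all the necessary points, but your treatment of the quadratic term follows a genuinely different route from the paper's.

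For the second-order term, the paper computes $\partial^2\Omega/\partial z^c_i\partial z^c_j(0)$ directly via the Beltrami differential expansion $\phi(z^c)=\sum_i\theta^c_i z^c_i+\sum_{i,j}\theta^c_{ij}z^c_iz^c_j+\cdots$, obtaining (from Griffiths' computation in \cite{Griffiths2})
\[
\frac{\partial^2\Omega}{\partial z^c_i\partial z^c_j}(0)=\theta^c_{ij}\lrcorner\Omega_0+\theta^c_i\lrcorner\theta^c_j\lrcorner\Omega_0,
\]
and then uses a type argument: the left-hand side lies in $H^{n-2,2}(X_p)$ by Proposition~\ref{lm expansion proposition}, while $\theta^c_{ij}\lrcorner\Omega_0\in H^{n-1,1}(X_p)$, forcing $\theta^c_{ij}\lrcorner\Omega_0=0$ in the canonical coordinate. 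You instead bypass the Beltrami differential entirely by iterating Lemma~\ref{lm derivative lemma}: you differentiate $\Phi^{(2,0)}$ twice to reduce to $\partial\Phi^{(2,1)}_{\cdot,\mu}/\partial z^c_\nu(0)$, and then read this off as $\partial\Omega_{(1),\mu}/\partial z^c_\nu(0)$, which Kodaira--Spencer identifies with $[\theta^c_\nu\lrcorner\theta^c_\mu\lrcorner\Omega_0]$ (the vanishing of the $H^{n-1,1}$-component being the $A_\nu=0$ step in the proof of Lemma~\ref{lm derivative lemma}).

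Your approach is arguably cleaner and more self-contained for this theorem, since it stays within the matrix formalism of Section~\ref{bl} and needs no appeal to the Kuranishi-type expansion of $\phi(z^c)$. The paper's route, on the other hand, yields the byproduct $\theta^c_{ij}\lrcorner\Omega_0=0$ as a feature of the canonical coordinate, and this vanishing is reused later in the proof of Theorem~\ref{linear expansion}; your argument does not produce that identity explicitly, though it is of course recoverable.
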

\begin{proof}
By construction in \eqref{lm section}, we define the section of Hodge bundle $\mathcal{H}^{k,n-k}$ by
\begin{eqnarray}\label{canonical expansion1}
\Omega(z^c)&:=&\Omega_{(n-k)}(z^c)  \\
&=&\sum_{\beta=n-k}^{k}\eta_{(\beta)}\cdot\Phi^{(\beta,n-k)}(z^c) \nonumber \\
&=&\eta_{(n-k)}+\sum_{\beta\ge n-k+1}\eta_{(\beta)}\cdot\Phi^{(\beta,n-k)}(z^c).\nonumber
\end{eqnarray}

According to Remark \ref{reduction}, we only need to prove the theorem for Calabi--Yau manifolds, and hence we can assume that $k=n$.
Write $\Omega_{0}=\eta_{(0)}\in H_{\mathrm{pr}}^{n,0}(X_{p})$.
Then 
\begin{eqnarray}\label{canonical expansion1'}
\Omega(z^c)&:=&\Omega_{(0)}(z^c)  \\
&=&\sum_{\beta=0}^{n}\eta_{(\beta)}\cdot\Phi^{(\beta,0)}(z^c) \nonumber \\
&=&\eta_{(0)}+\sum_{\beta\ge 1}\eta_{(\beta)}\cdot\Phi^{(\beta,0)}(z^c).\nonumber\\
&=&\Omega_0+\sum_{1\le i\le N}\eta_{f^{n-1}+i-1}z_i^c + \eta_{(2)}\cdot\Phi^{(2,0)}(z^c)\nonumber \\
&&+\sum_{\beta\ge 3}\eta_{(\beta)}\cdot\Phi^{(\beta,0)}(z^c). \nonumber
\end{eqnarray}

By the proof of Griffiths transversality in Pages 813--814 of \cite{Griffiths2}, we know that 
\begin{align}\label{term 3}
 \eta_{f^{n-1}+i-1}=\frac{\partial \Omega}{\partial z^c_i}(0) = \theta^c_i \lrcorner \Omega_0.
 \end{align} 

Applying Proposition \ref{lm expansion proposition} for the canonical coordinate, we have that 
\begin{align}\label{term 3}
\eta_{(2)}\cdot\Phi^{(2,0)}(z^c)=O(|z^c|^2) \in H^{n-2,2}(X_p),
\end{align} 
and 
\begin{align}\label{term 4}
\sum_{\beta\ge 3}\eta_{(\beta)}\cdot\Phi^{(\beta,0)}(z^c) =O(|z^c|^3) \in \bigoplus_{l\ge 3}H^{n-l,l}(X_p).
\end{align}
Therefore, we only need to check the third term in equation \eqref{canonical expansion1'}, i.e. 
\begin{align}\label{term 3'}
\eta_{(2)}\cdot\Phi^{(2,0)}(z^c)= \sum_{1\le i,j\le N}[\theta^c_i \lrcorner\theta^c_j \lrcorner\Omega_0]z_i^cz_j^c+O(|z^c|^3).
\end{align}

In fact, equations \eqref{term 3} and \eqref{term 4} imply that the second order term $\frac{\partial^2 \Omega}{\partial z^c_i\partial z^c_j}(0)$ of the expansion of $\Omega(z^c)$ lies in $H^{n-2,2}(X_p)$.

By the calculation in Page 813 of \cite{Griffiths2}, the second order term
$$\frac{\partial^2 \Omega}{\partial z^c_i\partial z^c_j}(0)=\theta_{ij}^c\lrcorner\Omega_0 + \theta^c_i \lrcorner\theta^c_j \lrcorner\Omega_0,$$
where $\theta_{ij}^c$ is the second order term of the expansion of $\phi(z^c)$,
such that
$$\phi(z^c)=\sum_i \theta^c_i z_i^c+\sum_{ij}\theta^c_{ij} z_i^cz_j^c+O(|z^c|^3)\in H^1(X_p,\Theta_{X_p}),$$
is the Beltrami differential which defines the complex structure on the polarized manifold near $p$.

Then we conclude that 
$$\theta_{ij}^c\lrcorner\Omega_0 =\frac{\partial^2 \Omega}{\partial z^c_i\partial z^c_j}(0)- \theta^c_i \lrcorner\theta^c_j \lrcorner\Omega_0 \in H^{n-2,2}(X_p).$$
But $\theta_{ij}^c\lrcorner\Omega_0\in H^{n-1,1}(X_p)$. We can see that, under the canonical coordinate, $\theta_{ij}^c\lrcorner\Omega_0=0$, and then
$$\frac{\partial^2 \Omega}{\partial z^c_i\partial z^c_j}(0)=\theta^c_i \lrcorner\theta^c_j \lrcorner\Omega_0.$$
Therefore, we have proved equation \eqref{term 3'}, which completes the proof of the theorem.
\end{proof}

\subsection{Ball type case}\label{Section Ball type}
In this section, we come back to the setup in Section \ref{basic assump}, i.e. we consider $X$ as a polarized manifold of ball type case in Definition \ref{ball type}.

As given in Definition \ref{ball type},
let $\Omega_0$ in $H_{\mathrm{pr}}^{k,n-k}(X)$ such that, if $\{\theta_{i}\}_{1\le i\le m}$ is the basis of $H^{1}(X,\Theta_{X})$, then 
\begin{equation}
\{\theta_{i}\lrcorner \Omega_0\}_{1\le i\le m} \textrm{ is linearly independent in } H_{\mathrm{pr}}^{k-1,n-k+1}(X), \tag{$*_1$} 
\end{equation}
and 
\begin{equation}
\theta_{i}\lrcorner\theta_{j}\lrcorner (H_{\mathrm{pr}}^{k,n-k}(X))=0, \text{ for any }1\le i,j \le m. \tag{$*_2$}
\end{equation}

Let $f:\,\mathcal{X}\to S$ be a family of ball type manifolds over the complex manifold $S$. 
As discussed in Section \ref{GCY}, 
we take a small enough neighborhood $U$ of $S$, to get the well-defined period map
$$\Phi:\, U \to D,$$
as well as the Hodge bundles $$\mathscr{H}^{l,n-l}:\, = \mathscr{F}^{l}/\mathscr{F}^{l+1}$$ over $U$, for $n-k\le l\le k$.

Fix $p\in S$ as the base point, we can choose any holomorphic coordinate $\{U;z\}$ around the base point $p$ such that $z(p)=0$. Then the basis $\{\theta_{i}\}_{1\le i\le m}$ of $H^{1}(X,\Theta_{X})$ can be given by the images of the holomorphic vectors
$$\frac{\partial}{\partial z_i}\bigg|_0\in \mathrm{T}_pS,\ 1\le i\le N,$$
under the Kodaira-Spencer map.
By equation ($*_1$), $\{\theta_{i}\lrcorner \Omega_0\}_{1\le i\le N}$ is linearly independent in $H^{k-1,n-k+1}(X)$. 

Now we devote ourselves to proving Theorem \ref{linear expansion}.
By Remark \ref{reduction}, we can assume that $k=n$ in the definition of polarized manifold of ball type.

We then choose the adapted basis
\begin{equation}\label{adapted basis of ball}
\eta=\{\eta_{(0)},\eta_{(1)},\cdots,\eta_{(n)}\},
\end{equation}
of the Hodge structure $$H_{\mathrm{pr}}^n(X_p, \C)=H_{\mathrm{pr}}^{n,0}(X_p)\oplus H_{\mathrm{pr}}^{n-1,1}(X_p)\oplus \cdots H_{\mathrm{pr}}^{0,n}(X_p)$$ 
where $\eta_{(l)}$ is the basis of $H_{\mathrm{pr}}^{n-l,l}(X_p)$ for $0\le l\le n$, such that $\Omega_0$ is contained in $\eta_{(0)}$, and that $\{\theta_{i}\lrcorner \Omega_0\}_{1\le i\le N}$ is contained in $\eta_{(1)}$.

Under the adapted basis $\eta$, we have the matrix representations $\Phi(q)$ of the image in $N_+$ of the period map for any $q\in U\subset\P^{-1}c$, which is of the form
\begin{equation}\label{blocks of 2,1}
\left(\begin{array}[c]{ccccc}1 &&&& \\ 
0& I&&& \\ 
\Phi_{b}^{(1,0)} & \Phi_{d}^{(1,0)} &I &&\\ 
\Phi_{c}^{(1,0)} & \Phi_{e}^{(1,0)} & 0&I &\\  
\vdots & \vdots & \vdots & \vdots & \ddots 
\end{array}\right),
\end{equation}
where the blocks 
$$\Phi^{(1,0)} =\left(\begin{array}[c]{cc} 
\Phi_{b}^{(1,0)} & \Phi_{d}^{(1,0)} \\ 
\Phi_{c}^{(1,0)} & \Phi_{e}^{(1,0)} \\  
\end{array}\right)$$
are defined with respect to the decomposition 
$$\{\Omega_0\} \cup \eta_{(0)}\setminus \{\Omega_0\}$$
of the basis of $H_{\mathrm{pr}}^{n,0}(X_p)$, and the decomposition 
$$\{\theta_{i}\lrcorner \Omega_0\}_{1\le i\le N} \cup \eta_{(1)}\setminus \{\theta_{i}\lrcorner \Omega_0\}_{1\le i\le N}$$
of the basis of $H_{\mathrm{pr}}^{n-1,1}(X_p)$.

We are interested in the first column of the matrix $\Phi(q)$. Hence we define the indices $I_{j}$ for $0\le j\le n$ and $\eta^{j}_{i}\in H^{n-j,j}(X_p)$ such that 
$$\eta_{(j)}=\{\eta^{j}_{i}:\, i\in I_{j}\},$$
and define $\Phi^{j}_{i}(z)$ such that 
$$\left(\big(\Phi^{j}_{i}(z)\big)_{i\in I_{j}}\right)^{T}$$
is the first column of the block $\Phi^{(j,0)}$.

Moreover, we assume that $$I_{1}=\{1,\dots , N\}\cup I^{c}_{1}$$ such that
$$\{\eta^{1}_{i}:\, 1\le i \le N\}=\{\theta_i \lrcorner\Omega_0:\, 1\le i \le N\},$$
$$\Phi_{b}^{(1,0)}=\left(\big(\Phi^{1}_{i}(z)\big)_{1\le i \le N}\right)^{T},$$
and 
$$\Phi_{c}^{(1,0)}=\left(\big(\Phi^{1}_{i}(z)\big)_{i\in I^{c}_{1}}\right)^{T}.$$


We can define the coordinate $\{U;z^c=(z_1^c,\cdots,z_N^c)\}$ around the base point $p$ by
\begin{align}\label{canonical coordinate'}
z_i^c(z)=\Phi^{1}_{i}(z),\, 1\le i \le N.
\end{align}
By Lemma \ref{lm derivative lemma}, $\{U;z^c=(z_1^c,\cdots,z_N^c)\}$ is indeed a coordinate around the base point $p$.



\begin{theorem}\label{linear expansion}
Let $X$ be the polarized manifold of ball type.
Under the canonical coordinate $\{U;z^c\}$ around the base point $p$, we have the following expansion of the holomorphic section $\Omega(z^c)$ of the Hodge bundle $\mathcal{H}^{k,n-k}$ for the polarized manifolds of ball type as
\begin{align}\label{linear expansion0}
\Omega(z^c)=\Omega_0+\sum_{1\le i\le N}[\theta^c_i \lrcorner\Omega_0]z_i^c.
\end{align}
\end{theorem}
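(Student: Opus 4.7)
My plan is to reduce to the weight-$n$ case via Remark \ref{reduction} and then show that all Taylor coefficients of $\Omega(z^c)$ at the origin of order at least $2$ vanish identically. Using the adapted basis \eqref{adapted basis of ball} and the canonical coordinate \eqref{canonical coordinate'}, write
$$
\Omega(z^c)=\Omega_0+\sum_{1\le i\le N} z^c_i\,\eta^1_i+\sum_{i\in I^c_1}\Phi^1_i(z^c)\,\eta^1_i+\sum_{j\ge 2}\eta_{(j)}\cdot\phi^{(j)}(z^c),
$$
with $\phi^{(j)}$ the first column of $\Phi^{(j,0)}$, and decompose $H^{n-1,1}_{\mathrm{pr}}(X_p)=W_1\oplus V$, where $W_1=\mathrm{Span}_{\C}\{\theta^c_i\lrcorner\Omega_0:1\le i\le N\}$ is the Kodaira--Spencer image and $V=\mathrm{Span}_{\C}\{\eta^1_i:i\in I^c_1\}$ is its complement in the chosen basis.

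The core claim is that $\partial^\alpha\Omega(0)\in W_1$ for every multi-index $\alpha$ with $|\alpha|\ge 1$. For $|\alpha|=1$ this is Kodaira--Spencer. For $|\alpha|\ge 2$, I iterate the differential identity $\partial_\mu\Omega(z^c)=\theta^c_\mu(z^c)\lrcorner\Omega(z^c)$ (read off from the proof of Lemma \ref{lm derivative lemma}: the $H^{n,0}(X_{z^c})$-component is absent because $\Omega(z^c)$ is the first column of $\Omega_{(0)}(z^c)$ and $\Phi^{(0,0)}$ is the identity block). Applying Leibniz repeatedly and evaluating at $z^c=0$ expresses $\partial^\alpha\Omega(0)$ as a finite combinatorial sum of iterated contractions $\theta^c_{\beta_1}\lrcorner\theta^c_{\beta_2}\lrcorner\cdots\lrcorner\theta^c_{\beta_s}\lrcorner\Omega_0$, with the $\theta^c_{\beta_l}$ Taylor coefficients of the Beltrami field $\phi(z^c)\in H^1(X_p,\Theta_{X_p})$. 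Condition $(*_2)$, extended linearly, annihilates every double contraction; combined with the fact that $\lrcorner$ by a $\bar\partial$-closed Beltrami descends to Dolbeault cohomology and sends zero classes to zero classes, every contraction with $s\ge 2$ factors then vanishes cohomologically. Only the single contraction $\theta^c_\alpha\lrcorner\Omega_0$ remains, and it lies in $W_1$ by construction.

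Comparing the two descriptions of $\partial^\alpha\Omega(0)$ for $|\alpha|\ge 2$ completes the proof: from the expansion its $V$-component equals $\sum_{i\in I^c_1}\partial^\alpha\Phi^1_i(0)\,\eta^1_i$ and its $H^{n-j,j}_{\mathrm{pr}}(X_p)$-component for $j\ge 2$ equals $\eta_{(j)}\cdot\partial^\alpha\phi^{(j)}(0)$; by the core claim each of these lies in $W_1$, but together they live in $V\oplus\bigoplus_{j\ge 2}H^{n-j,j}_{\mathrm{pr}}(X_p)$, which intersects $W_1$ trivially. Hence all such components vanish. Combined with $\Phi^1_i(0)=\partial_\mu\Phi^1_i(0)=0$ for $i\in I^c_1$ (from the identity $\partial_\mu\Omega(0)=\theta^c_\mu\lrcorner\Omega_0=\eta^1_\mu$) and $\phi^{(j)}(0)=\partial_\mu\phi^{(j)}(0)=0$ for $j\ge 2$ (Proposition \ref{lm expansion proposition} together with $\Phi^{(j,1)}(0)=0$), each $\Phi^1_i$ with $i\in I^c_1$ and each $\phi^{(j)}$ with $j\ge 2$ is a holomorphic function all of whose Taylor coefficients at the origin vanish, hence is identically zero on $U$. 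Substituting back yields \eqref{linear expansion0}.

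The main technical hurdle I anticipate is the rigorous bookkeeping of the combinatorial expansion of $\partial^\alpha\Omega(0)$ as iterated Beltrami contractions together with the cohomological vanishing for $s\ge 2$; this rests on the commutation identity $\bar\partial(\theta\lrcorner\omega)=-\theta\lrcorner\bar\partial\omega+(\bar\partial\theta)\lrcorner\omega$, on the $\bar\partial$-closedness of harmonic representatives of classes in $H^1(X_p,\Theta_{X_p})$, and on the fact that the ball type condition is open in families, so that $(*_2)$ holds uniformly on $U$ and one may work cohomologically throughout.
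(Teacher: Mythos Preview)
Your core claim---that $\partial^\alpha\Omega(0)\in W_1$ for every $|\alpha|\ge 1$---rests on treating each Taylor coefficient $\theta^c_\gamma:=\partial^\gamma\phi(0)$ of the Beltrami differential as an element of $H^1(X_p,\Theta_{X_p})$: you invoke $(*_2)$ to annihilate the $s\ge 2$ iterated contractions and place the $s=1$ term $\theta^c_\gamma\lrcorner\Omega_0$ in $W_1=\mathrm{Im}\bigl(H^1(X_p,\Theta_{X_p})\to H_{\mathrm{pr}}^{n-1,1}(X_p)\bigr)$. But $\phi(z^c)$ lives in $A^{0,1}(X_p,T_{X_p})$, not in cohomology, and only its \emph{first}-order coefficients are $\bar\partial$-closed. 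The Maurer--Cartan equation $\bar\partial\phi=\tfrac12[\phi,\phi]$ gives $\bar\partial\theta^c_{ij}=\tfrac12\bigl([\theta^c_i,\theta^c_j]+[\theta^c_j,\theta^c_i]\bigr)$, which has no reason to vanish; hence for $|\gamma|\ge 2$ the form $\theta^c_\gamma$ represents no class in $H^1(X_p,\Theta_{X_p})$. There is then no justification that the single contraction $\theta^c_\gamma\lrcorner\Omega_0$ (itself not $\bar\partial$-closed) has harmonic projection in $W_1$, nor that a mixed double contraction such as $\theta^c_{ij}\lrcorner\theta^c_k\lrcorner\Omega_0$, which appears in $\partial_i\partial_j\partial_k\Omega(0)$, is killed by $(*_2)$. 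Your own commutation identity makes the obstruction explicit: contraction by $\theta$ descends to Dolbeault cohomology only when $\bar\partial\theta=0$. So the argument breaks already at $|\alpha|=2$ for the $W_1$ assertion, and at $|\alpha|=3$ for the $s\ge 2$ vanishing.

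The paper avoids higher Beltrami coefficients entirely by working not through the Taylor expansion at the origin but pointwise at every $z^c$ near $0$, using that the ball-type conditions are open so that $(*_1)$ and $(*_2)$ hold for $X_{z^c}$ as well. First it shows $\partial_j\Phi^1_i(z^c)=0$ for $i\in I^c_1$ at \emph{all} $z^c$ (via Griffiths transversality at $z^c$), whence $\Phi^1_i\equiv 0$. Then it computes $\partial_i\partial_j\Omega(z^c)=[\theta^c_{ij}(z^c)\lrcorner\Omega(z^c)]+[\theta^c_i(z^c)\lrcorner\theta^c_j(z^c)\lrcorner\Omega(z^c)]$: the second bracket vanishes by $(*_2)$ applied at $z^c$, where $\theta^c_i(z^c),\theta^c_j(z^c)$ are genuine Kodaira--Spencer \emph{classes} for $X_{z^c}$, and the first, lying in $H^{n-1,1}(X_{z^c})$, is forced to zero by comparison with the already-truncated expansion. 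One obtains $\partial_i\partial_j\Omega(z^c)=0$ identically on $U$, and \eqref{linear expansion0} follows by integration. To repair your approach you would have to replace the origin-only Taylor analysis by this kind of pointwise argument, or else supply an independent reason---not available from $(*_2)$ alone---why the non-closed higher Beltrami pieces still contract into $W_1$.
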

\begin{proof}
Again, by Remark \ref{reduction}, we only need to prove the theorem for $k=n$.

By the proof of Theorem \ref{canonical expansion theorem}, we can construct the holomorphic section $\Omega(z^c)$ of the Hodge bundle $\mathscr{H}^{n,0}$ with the expansion:
\begin{eqnarray}\label{linear expansion1}
\Omega(z^c)&=&\Omega_0+\sum_{1\le i\le N}[\theta^c_i \lrcorner\Omega_0]z_i^c+\sum_{i\in I^c_1}\eta^{1}_{i}\cdot\P^{1}_i(z^c) + \sum_{i\in I_2}\eta^{2}_{i}\cdot\P^{2}_i(z^c) 
\nonumber\\
&& +\sum_{j\ge 3}\sum_{i\in I_j}\eta^{j}_{i}\cdot\P^{j}_i(z^c),
\end{eqnarray}
where the indices are defined above the theorem.

By Griffiths transversality, we see that 
$$\theta^c_j \lrcorner\Omega_0=\frac{\partial \Omega}{\partial z^c_j}(0)=\theta^c_j \lrcorner\Omega_0+\sum_{i\in I^c_1}\eta^{1}_{i}\cdot\frac{\partial \P^{1}_i}{\partial z^c_j}(0),$$
which implies that $\frac{\partial \P^{1}_i}{\partial z^c_j}(0)=0$.

For $z^c$ near the origin, by Griffiths transversality again, we have
$$\theta^c_j(z^c) \lrcorner\Omega(z^c)=\frac{\partial \Omega}{\partial z^c_j}(z^c)=\theta^c_j \lrcorner\Omega_0+\sum_{i\in I^c_1}\eta^{1}_{i}\cdot\frac{\partial \P^{1}_i}{\partial z^c_j}(z^{c})+(\cdots)\in F^{1}(X_{z^{c}})/F^{0}(X_{z^{c}}),$$
where $\theta^c_j(z^c)$ is the first oder term of the local expression of the Beltrami differential around the point $z^{c}$, and the term $(\cdots)\in \oplus_{\beta\ge 2}H^{n-\beta,\beta}(X_{p})$.

Note that, similar to the construction in \eqref{lm section}, the adapted basis of $$F^{1}(X_{z^{c}})/F^{0}(X_{z^{c}})$$
can be constructed in the forms as
\begin{eqnarray}\label{linear expansion3}
\Omega_{(1)}(z^c)&=&\eta_{(1)}+\sum_{\beta\ge 2}\eta_{(\beta)}\cdot \P^{(\beta,1)}(z^{c})\\
&&\equiv \eta_{(1)}, \text{ mod }\oplus_{\beta\ge 2}H^{n-\beta,\beta}(X_{p}), \nonumber 
\end{eqnarray}
which implies that  
$$\theta^c_j(z^c) \lrcorner\Omega(z^c)\equiv \theta^c_j \lrcorner\Omega_0 , \text{ mod }\oplus_{\beta\ge 2}H^{n-\beta,\beta}(X_{p}).$$
Therefore, by comparing types, we derive that
$$\frac{\partial \P^{1}_i}{\partial z^c_j}(z^c)=0, \text{ for } i\in I^{c}_{1},\ 1 \le j\le N.$$
Then we conclude that $\P^{1}_i(z^c)$, for $i\in I_1^c$, is constant in $U$ with value equal to $\P_i(0)=0$.

Now the expansion in equation \eqref{linear expansion1} becomes 
$$\Omega(z^c)=\Omega_0+\sum_{1\le i\le N}[\theta^c_i \lrcorner\Omega_0]z_i^c+ \sum_{i\in I_2}\eta^{2}_{i}\cdot\P^{2}_i(z^c) +\sum_{j\ge 3}\sum_{i\in I_j}\eta^{j}_{i}\cdot\P^{j}_i(z^c),$$
such that $\P^{j}_i(z^c) =O(|z|^{j-1})$, for $i\in I_{j}$.

By the calculation in Page 813 of \cite{Griffiths2} and equation $(*_{2})$, the second order term at the origin is contained in
$$\sum_{i\in I_2}\eta^{2}_{i}\cdot\P^{2}_i(z^c) = \sum_{1\le i,j\le N}[\theta^c_i \lrcorner\theta^c_j \lrcorner\Omega_0]z_i^cz_j^c + O(|z^c|^3)=O(|z^c|^3),$$
which implies that 
\begin{align}\label{linear expansion2}
\Omega(z^c)=\Omega_0+\sum_{1\le i\le N}[\theta^c_i \lrcorner\Omega_0]z_i^c+O(|z^c|^3)(\Omega),
\end{align}
such that the higher terms $O(|z^c|^3)(\Omega)$ of $\Omega$ are in $\oplus_{j\ge 3}H^{n-j,j}(X_{p})$.

By the calculation in Page 813 of \cite{Griffiths2} and equation $(*_{2})$ again, for $z^{c}$ near the origin, the second order term at $z^{c}$ is
$$\frac{\partial^{2} \Omega}{\partial z_{i}^{c}\partial z_{j}^{c}}(z^{c})=[\theta^c_i(z^{c}) \lrcorner\theta^c_j(z^{c}) \lrcorner\Omega(z^{c})]+[\theta^c_{ij}(z^{c}) \lrcorner\Omega(z^{c})]=[\theta^c_{ij}(z^{c}) \lrcorner\Omega(z^{c})],$$
where the $\theta^c_i(z^{c})$'s are the first terms of the local expansion of the Beltrami differential at the point $z^{c}$, and the $\theta^c_{ij}(z^{c})$'s are the second terms.

Note that 
$$[\theta^c_{ij}(z^{c}) \lrcorner\Omega(z^{c})]\in F^{1}(X_{z^{c}})/F^{0}(X_{z^{c}}),$$
which, combined the basis given by equation \eqref{linear expansion3}, implies that
$$[\theta^c_{ij}(z^{c}) \lrcorner\Omega(z^{c})]=(\eta_{(1)}+\sum_{\beta\ge 2}\eta_{(\beta)}\cdot \P^{(\beta,1)}(z^{c}))\cdot A,$$
for some matrix $A$. 

By equation \eqref{linear expansion2}, we have that 
$$\frac{\partial^{2} \Omega}{\partial z_{i}^{c}\partial z_{j}^{c}}(z^{c})=\frac{\partial^{2} }{\partial z_{i}^{c}\partial z_{j}^{c}}O(|z^c|^3)(\Omega) \in \bigoplus_{\beta\ge 3}H^{n-\beta,\beta}(X_{p}).$$

Therefore, we can conclude that $$\eta_{(1)}\cdot A \in \bigoplus_{\beta\ge 3}H^{n-\beta,\beta}(X_{p}),$$
which implies that $A=0$, and hence $[\theta^c_{ij}(z^{c}) \lrcorner\Omega(z^{c})]=0$ for any $z^{c}$ near the origin. Then 
$$\frac{\partial^{2} \Omega}{\partial z_{i}^{c}\partial z_{j}^{c}}(z^{c})=0$$
for any $z^{c}$ near the origin. This implies that the higher terms $O(|z^c|^3)(\Omega)$ in equation \eqref{linear expansion2} vanish, which gets the expansion \eqref{linear expansion0} of the theorem.
%
%
%
%
\end{proof}

\begin{corollary}\label{2expansion}
Under the conditions of Theorem \ref{linear expansion}, the holomorphic sections 
$$\theta^c_j(z^c) \lrcorner\Omega(z^c)\in H^{k-1,n-k+1}(X_{z^{c}})$$
is identified to $\theta^c_j \lrcorner\Omega_0$ in $H^{k-1,n-k+1}(X_{0})$ for $1\le j\le N$..
\end{corollary}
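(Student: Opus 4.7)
The plan is to obtain this corollary directly from the linear expansion (\ref{linear expansion0}) of Theorem \ref{linear expansion} by differentiation, and then reinterpret the derivative via Griffiths transversality. Differentiating (\ref{linear expansion0}) in the canonical coordinate gives, for each $1\le j\le N$,
$$\frac{\partial \Omega}{\partial z_j^c}(z^c) = [\theta_j^c \lrcorner \Omega_0],$$
which, when viewed in the fixed ambient space $H^n(X_p,\C)$ provided by the transversely holomorphic trivialization $\sigma_{z^c}:X_{z^c}\to X_p$, is a section that is constant in $z^c$.

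Next, I would appeal to Griffiths transversality together with the first-order formula from Page 813 of \cite{Griffiths2} already used in the proof of Theorem \ref{linear expansion}, according to which
$$\frac{\partial \Omega}{\partial z_j^c}(z^c) = \theta_j^c(z^c) \lrcorner \Omega(z^c) \quad \text{in } F^{k-1}(X_{z^c})/F^k(X_{z^c})\simeq H^{k-1,n-k+1}(X_{z^c}),$$
where $\theta_j^c(z^c)$ denotes the first-order term of the Beltrami differential at $z^c$, i.e., the Kodaira--Spencer image of $\partial/\partial z_j^c$ at $z^c$. Equating the two expressions above immediately yields $\theta_j^c(z^c)\lrcorner \Omega(z^c) = \theta_j^c \lrcorner \Omega_0$ under the identification provided by the trivialization.

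The one bookkeeping point worth checking is that this identification of $H^{k-1,n-k+1}(X_{z^c})$ with $H^{k-1,n-k+1}(X_p)$ is the natural one: via the adapted-basis construction $\Omega_{(1)}(z^c) = \eta_{(1)} + \sum_{\beta\ge 2}\eta_{(\beta)}\cdot \Phi^{(\beta,1)}(z^c)$ of (\ref{linear expansion3}), the class of $\theta_j^c \lrcorner \Omega_0$ in $F^{k-1}(X_{z^c})/F^k(X_{z^c})$ corresponds to the $j$-th entry of $\Omega_{(1)}(z^c)$, which is precisely the holomorphic extension of $\theta_j^c \lrcorner \Omega_0 \in H^{k-1,n-k+1}(X_p)$ to $H^{k-1,n-k+1}(X_{z^c})$ along the section of the Hodge bundle $\mathscr H^{k-1,n-k+1}$. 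No real obstacle is anticipated: the corollary is the content of the formula $\partial \Omega / \partial z_j^c = \theta_j^c \lrcorner \Omega_0$ read on both sides through the Kodaira--Spencer contraction.
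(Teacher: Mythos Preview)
Your proposal is correct and matches the paper's treatment: the corollary is stated without a separate proof, as it follows immediately by differentiating the linear expansion \eqref{linear expansion0} and invoking the identity $\frac{\partial \Omega}{\partial z^c_j}(z^c)=\theta^c_j(z^c)\lrcorner\Omega(z^c)$ already established via Griffiths transversality inside the proof of Theorem \ref{linear expansion}. The subsequent Remark in the paper makes exactly your bookkeeping point, that the identification is the one induced by the diffeomorphism $X_{z^c}\to X_0$ coming from the trivialization.
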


\begin{remark}
By the definition of the period map, the identifications in Corollary \ref{2expansion} between $\theta^c_j(z^c) \lrcorner\Omega(z^c)$ and $\theta^c_j \lrcorner\Omega_0$, for $1\le j\le N$, are given by the diffeomorphism
$$X_{z^{c}} \stackrel{\simeq}{\longrightarrow} X_{0},$$
for $z^{c}$ near the origin.
\end{remark}

\section{Period maps and refined period maps}
\subsection{Moduli spaces and period maps}

First we follow the notations in \cite{LS15}. We denote by $\M$ the moduli space of polarized manifolds of ball type, which is assumed to exist. 

We fix a lattice $\Lambda$ with a pairing $Q_{0}$, where $\Lambda$ is isomorphic to $H^n(X_{0},\mathbb{Z})/\text{Tor}$ for some $X_{0}$ in $\M$ and $Q_{0}$ is defined by the cup-product.
For a polarized manifold $(X,L)\in \M$, we define a marking $\gamma$ as an isometry of the lattices
\begin{equation}\label{marking}
\gamma :\, (\Lambda, Q_{0})\to (H^n(X,\mathbb{Z})/\text{Tor},Q).
\end{equation}

For any integer $m\geq 3$, we follow the definition of Szendr\"oi \cite{sz}
 to define an $m$-equivalent relation of two markings on $(X,L)$ by
$$\gamma\sim_{m} \gamma' \text{ if and only if } \gamma'\circ \gamma^{-1}-\text{Id}\in m \cdot\text{End}(H^n(X,\mathbb{Z})/\text{Tor}),$$
and denote $[\gamma]_{m}$ to be the set of all the $m$-equivalent classes of $\gamma$.
Then we call $[\gamma]_{m}$ a level $m$
structure on the polarized manifold $(X,L)$.


Let $\mathscr{L}_{m}$ be the moduli space of polarized manifolds with level $m$ structure, $m\ge 3$, which contains the given polarized manifold $(X,L)$.

\begin{definition}\label{T-class} A polarized manifold $(X,L)$ is said to belong to the algebraic T-class, if the irreducible component $\Z$ of $\mathscr{L}_{m}$ containing $(X,L)$ is a nonsingular algebraic variety over $\C$, on which there is an algebraic family $f_{m}:\,\U_{m}\to \Z$ for all $m\ge m_{0}$, where $m_{0}\ge 3$ is some integer.
\end{definition}
In this case, we may simply take $m_{0}=3$ without loss of generality.

Although $\U_m$ and $\Z$ are nonsingular algebraic varieties, we still consider the analytic topology on them, i.e. $\U_m=\U_m^{an}$ and $\Z=\Z^{an}$.

Let $\Phi_{\Z} :\Z \to \Gamma\backslash D$ be the period map for the family $f_m : \U_m \to \Z$, with $\rho:\, \pi_1({\Z}) \to \Gamma=\Gamma_{m}\subset \mathrm{Aut}(H_{\mathbb{Z}},Q)$ the monodromy representation.
Since the period map is locally liftable, we can lift the period map onto the universal cover $\T$ of $\Z$, to get the lifted period map $\Phi :\, \T \to D$ such that the diagram
$$\xymatrix{
\T \ar[r]^-{\Phi} \ar[d]^-{\pi_m} & D\ar[d]^-{\pi_{D}}\\
\Z \ar[r]^-{\Phi_{\Z}} & \Gamma\backslash D
}$$
is commutative.

Let $\T'$ be an irreducible component of the moduli space of marked and polarized manifolds containing $(X, L)$ in the T-class. We call $\T'$ the Torelli space.

By construction, the Torelli space $\T'$ is a covering space of $\mathcal{Z}_m$, with the natural covering map $\pi'_m:\, \mathcal{T}'\rightarrow\mathcal{Z}_m$ given by
$$[X, L, \gamma]\mapsto [X, L, [\gamma]_{m}],$$
where $[X, L, \gamma]$ and $[X, L, [\gamma]_{m}]$ denote the isomorphism class of polarized manifolds with markings and with level $m$ structure respectively.

From the definition of marking in \eqref{marking}, we have a well-defined period map $\Phi':\, \T'\to D$ for the pull-back family $\mathcal{U}'\to \T'$ via the covering map $\pi'_m:\, \mathcal{T}'\rightarrow\mathcal{Z}_m$.

Now we summarize the period maps defined as above in the following commutative diagram
$$\xymatrix{
\T \ar[dr]^-{\pi}\ar[rr]^-{\Phi} \ar[dd]^-{\pi_m} && D\ar[dd]^-{\pi_{D}}\\
&\T'\ar[ur]^-{\Phi'}\ar[dl]_-{\pi'_{m}}&\\
\Z \ar[rr]^-{\Phi_{\Z}} && \Gamma\backslash D,
}$$
where the  maps $\pi_{m}$, $\pi'_{m}$ and $\pi$ are all natural covering maps between the corresponding spaces.

We recall a lemma concerning the monodromy group $\Gamma$ on $\Z$ for $m\geq 3$ in \cite{LS15}.
\begin{lemma}\label{trivial monodromy}
Let $\gamma$ be the image of some element of $\pi_1(\mathcal{Z}_m)$ in $\Gamma$ under the monodromy representation. Suppose that $\gamma$ is finite, then $\gamma$ is trivial. Therefore for $m\geq 3$, we can assume that $\Gamma$ is torsion-free and $\Gamma\backslash D$ is smooth.
\end{lemma}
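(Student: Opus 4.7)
The plan is to exploit the compatibility between the level $m$ structure and the monodromy action, and then to invoke the classical Minkowski/Serre lemma on torsion-free congruence subgroups.

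First I would unwind the definitions. A loop $\sigma\in\pi_1(\mathcal{Z}_m,[X,L,[\gamma]_m])$ represents a deformation of $(X,L)$ along which the level $m$ equivalence class $[\gamma]_m$ is preserved. Under parallel transport of the marking $\gamma$, the loop $\sigma$ produces a new marking $\gamma'$ on the same $(X,L)$, and by construction $[\gamma']_m=[\gamma]_m$. Unwinding the definition $\gamma\sim_m\gamma'$ from the excerpt, this says exactly that the monodromy image $\rho(\sigma)\in\mathrm{Aut}(H^n(X,\mathbb{Z})/\mathrm{Tor},Q)$ satisfies
\begin{equation*}
\rho(\sigma)-\mathrm{Id}\in m\cdot\mathrm{End}(H^n(X,\mathbb{Z})/\mathrm{Tor}).
\end{equation*}
Thus every element of $\Gamma=\Gamma_m$ lies in the principal level-$m$ congruence subgroup of $\mathrm{Aut}(H_{\mathbb{Z}},Q_0)$ after identifying the two lattices via the marking.

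Second, I would apply the classical Minkowski/Serre lemma: for $m\ge 3$, a matrix $A\in GL(H_{\mathbb{Z}})$ satisfying $A\equiv\mathrm{Id}\pmod{m}$ and $A^r=\mathrm{Id}$ for some $r\ge 1$ must equal $\mathrm{Id}$. The standard argument writes $A=\mathrm{Id}+m B$ with $B$ an integer matrix, expands $A^r=\mathrm{Id}$ via the binomial theorem, reduces modulo the smallest prime $p$ dividing some entry of $mB$, and derives a contradiction unless $B=0$ (the case $m=2$ is precisely where the argument fails). Applying this to $\gamma=\rho(\sigma)$, the assumption of finite order forces $\gamma=\mathrm{Id}$.

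This already proves the first assertion. For the last sentence, one observes that $\Gamma$ acts on $D$ via $G_{\mathbb{R}}$, and the stabilizer of any point is compact, hence the stabilizer in $\Gamma$ is finite, hence by what we just proved is trivial. Therefore $\Gamma$ acts freely and properly discontinuously on $D$, which gives that the quotient $\Gamma\backslash D$ is a complex manifold. I expect no genuine obstacle here; the only subtle point is to verify carefully that a loop in $\mathcal{Z}_m$ really does preserve the level $m$ class of the marking, which follows because the covering $\pi_m'\colon\mathcal{T}'\to\mathcal{Z}_m$ is precisely defined by forgetting $\gamma$ down to $[\gamma]_m$, so the deck transformations of this cover — which contain the image of $\pi_1(\mathcal{Z}_m)/\pi_1(\mathcal{T}')$ — act trivially on level $m$ classes.
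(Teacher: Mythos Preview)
The paper does not actually prove this lemma: it merely recalls the statement from \cite{LS15} and gives no argument. Your proposal supplies the standard proof, and it is correct. The two essential ingredients you identify---that the monodromy of any loop in $\mathcal{Z}_m$ lies in the principal level-$m$ congruence subgroup (because transporting a marking around a loop must preserve its $\sim_m$-class), and Minkowski's lemma that this congruence subgroup is torsion-free for $m\ge 3$---are exactly what one expects the cited proof in \cite{LS15} to contain. Your deduction of smoothness of $\Gamma\backslash D$ from freeness of the action (finite isotropy in a discrete group forces torsion, hence triviality) is also the standard route.

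One small quibble: your sketch of Minkowski's lemma (``reduces modulo the smallest prime $p$ dividing some entry of $mB$'') is imprecise as written; the clean argument first reduces to prime-power order, writes $A=\mathrm{Id}+p^a N$ with $N\not\equiv 0\pmod p$ and $a\ge 1$ (respectively $a\ge 2$ when $p=2$, which is why $m\ge 3$ is needed), and then compares $p$-adic valuations in the expansion of $A^p-\mathrm{Id}$. But this is a well-known black box and citing it is entirely acceptable.
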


\subsection{Refined period map}\label{refined}
For convenience, we still assume in this subsection that $k=n$ in the definition of polarized manifold of ball type. 

Let $\Phi:\, \T \to D$ be the period map from the Teichm\"uller space $\T$.
Define $$\check{\T}:\,= \Phi^{-1}(N_{+}\cap D).$$
Define the projection map 
$$P^{1}:\, N_{+} \to N_{+}^{1},$$
where $N_{+}^{1}$ is the set of the first columns of the matrices $\Phi$ in $N_{+}$, which is of the form
$$(1,0,\Phi_{b}^{(1,0)}, \Phi_{c}^{(1,0)}, \cdots)^{T} $$ as given in \eqref{blocks of 2,1}. Then $P^{1}$ is just the corresponding projection map. 



The restricted period map $\P:\, \check{\T} \to N_{+}\cap D$ composed with the projection map 
$P^{1}:\, N_{+} \to N_{+}^{1},$
defines a map
$$\cP:\, \check{\T} \to N_{+}^{1}\cap D.$$
We call the map $\cP$ the restricted refined period map.

For any $q\in \check{\T}$, the image $\cP(q)$ is a column matrix in $N_{+}^{1}$, which is of the form
\begin{equation}\label{cpmatrix}
\cP(q)
=\left(\begin{array}[c]{cccc}1\\
0 \\   
\Phi_{b}^{(1,0)}(q) \\ 
\Phi_{c}^{(1,0)}(q) \\  
\Phi_{0}^{(2,0)}(q) \\ 
\vdots \end{array}\right),
\end{equation}
where 
$$\Phi_{0}^{(j,0)}=\left(\big(\Phi^{j}_{i}\big)_{i\in I_{j}}\right),\, j\ge 2$$
is the first column of the block $\Phi^{(j,0)}$.
Then the matrix $\cP(q)$ together with the adapted basis in \eqref{adapted basis of ball},
$$\eta=\{\eta_{(n)},\cdots,\eta_{(0)}\},$$
gives sections of $\Omega_{0}$ in $H_{\mathrm{pr}}^{n,0}(X_{p})$, which is introduced in Section \ref{Section Ball type}.


By Theorem \ref{linear expansion} in Section \ref{Section Ball type}, we have the following theorem, which is interpretation of the Theorem \ref{linear expansion} in the form of the restricted refined period map.

\begin{lemma}\label{linear expansion'}
For any $q\in \check{\T}$, the blocks of the matrix
$\cP(q)$ as given in \eqref{cpmatrix}
satisfy that 
$$\Phi_{c}^{(1,0)}(q)=0$$
 and $$\Phi_{0}^{(j,0)}(q)=0, \, j\ge 3.$$
\end{lemma}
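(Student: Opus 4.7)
The plan is to read off the vanishings from Theorem \ref{linear expansion} via the dictionary between the block entries of the matrix $\cP(q)\in N_{+}^{1}$ and the coefficients of $\Omega(z^c)$ in the chosen adapted basis $\eta$. The key observation is that the construction \eqref{lm section} realizes the first column of $\Phi(q)$ as precisely the expansion of the holomorphic section $\Omega(z^c)\in H^{n,0}(X_{z^c})$ with respect to $\eta=\{\eta_{(0)},\eta_{(1)},\cdots,\eta_{(n)}\}$, so the statement of the lemma is just the assertion that certain such expansion coefficients vanish.

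First I would rewrite
\[
\Omega(z^c)=\Omega_0+\eta_{(1)}\cdot \Phi^{(1,0)}(z^c)+\sum_{j\ge 2}\eta_{(j)}\cdot \Phi^{(j,0)}(z^c)
\]
using the refined splitting in \eqref{blocks of 2,1}, namely $\eta_{(0)}=\{\Omega_0\}\sqcup(\eta_{(0)}\setminus\{\Omega_0\})$ and $\eta_{(1)}=\{\theta_i\lrcorner\Omega_0\}_{1\le i\le N}\sqcup\{\eta^{1}_{i}\}_{i\in I_{1}^{c}}$, so that with the notation introduced before the lemma,
\[
\Omega(z^c)=\Omega_0+\sum_{1\le i\le N}[\theta_i\lrcorner\Omega_0]\cdot (\Phi_{b}^{(1,0)}(z^c))_{i}+\sum_{i\in I_{1}^{c}}\eta_{i}^{1}\cdot(\Phi_{c}^{(1,0)}(z^c))_{i}+\sum_{j\ge 2}\sum_{i\in I_{j}}\eta_{i}^{j}\cdot(\Phi_{0}^{(j,0)}(z^c))_{i}.
\]
By the definition \eqref{canonical coordinate'} of the canonical coordinate, the component $\Phi_{b}^{(1,0)}(z^c)$ is exactly $(z_{1}^{c},\cdots,z_{N}^{c})^{T}$, and by construction the Kodaira--Spencer images $\theta_{i}^{c}$ of $\partial/\partial z_{i}^{c}$ span the same lines as the basis chosen in the first $N$ slots of $\eta_{(1)}$, so $\{\theta_{i}^{c}\lrcorner\Omega_{0}\}$ coincides (up to the obvious linear change) with $\{\theta_{i}\lrcorner\Omega_{0}\}$.

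Now I invoke Theorem \ref{linear expansion}, which gives the exact equality
\[
\Omega(z^c)=\Omega_0+\sum_{1\le i\le N}[\theta_{i}^{c}\lrcorner\Omega_0]\,z_{i}^{c}
\]
with no remaining higher-order contributions. Comparing this with the display above, and using the linear independence of the adapted basis $\eta$ in $H_{\mathrm{pr}}^{n}(X_{p},\C)$, each block of $\cP(q)$ not already accounted for on the right-hand side must vanish identically on $\check{\T}$. This forces $\Phi_{c}^{(1,0)}(q)=0$ and $\Phi_{0}^{(j,0)}(q)=0$ for all $j\ge 2$ (in particular for $j\ge 3$), which is precisely the content of the lemma.

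There is no real obstacle here beyond careful bookkeeping of the block decomposition: the entire analytic input (the vanishing of higher-order terms via conditions $(*_1)$, $(*_2)$ and Griffiths transversality) has already been packaged into Theorem \ref{linear expansion}, and the lemma is its coordinate reformulation. After the reduction to $k=n$ via Remark \ref{reduction} and the identification of the first column of $\Phi(q)\in N_{+}$ with the adapted-basis expansion of $\Omega(z^c)$, the conclusion is immediate.
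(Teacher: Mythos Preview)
Your proposal is correct and follows exactly the approach the paper intends: the sentence immediately preceding the lemma says it is ``the interpretation of Theorem \ref{linear expansion} in the form of the restricted refined period map,'' and no separate proof is given. Your observation that the first column of $\Phi(q)\in N_+$ is precisely the coefficient vector of $\Omega(z^c)$ in the adapted basis $\eta$, together with the linear independence of $\eta$, is the whole argument; you even obtain the slightly stronger vanishing $\Phi_0^{(2,0)}(q)=0$ that the paper records in \eqref{cpmatrix'} but omits from the lemma statement. The only implicit step, in both your write-up and the paper, is that Theorem \ref{linear expansion} is proved on a coordinate neighborhood $U$ of $p$ while the lemma is asserted on all of $\check{\T}$; this is harmless since the blocks $\Phi_c^{(1,0)}$ and $\Phi_0^{(j,0)}$ are holomorphic on the connected set $\check{\T}$ and vanish on the open set $U$.
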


From Theorem \ref{linear expansion},  the matrix
$\cP(q)$ is of the form
\begin{equation}\label{cpmatrix'}
\cP(q)=\left(\begin{array}[c]{cccc}1 \\
0\\  
\Phi_{b}^{(1,0)}(q) \\
0 \\  
0\\ 
\vdots 
\end{array}\right)\simeq \left(\begin{array}[c]{cccc}1 \\  \Phi_{1}^{1}(q) \\ \vdots \\ \Phi_{N}^{1}(q)\end{array}\right).
\end{equation}


The above discussion implies that the image of $\cP$ lies in a Euclidean subspace which we denote by $\mathbb{C}^N\simeq N^0_+\subseteq N^1_+$.
\begin{theorem}
The image of $\cP$ lies in the complex ball $\bB^{N}$ in $N_{+}^{0}$. Therefore, the restricted refined period map $\cP$ extends to the holomorphic 
\begin{equation}\label{refined periods}
\cP:\, \T \to \bB^{N} \subset N_{+}^{0}.
\end{equation}
\end{theorem}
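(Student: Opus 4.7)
The plan is to combine Theorem \ref{linear expansion} (which pins down the exact form of the section $\Omega(z^c)$ in canonical coordinates) with the second Hodge--Riemann bilinear relation to produce an explicit quadratic inequality cutting out the ball, and then to remove the exceptional locus by Riemann extension.

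Using Remark \ref{reduction} I first reduce to $k=n$. For $q\in\check{\T}$, Lemma \ref{linear expansion'} simplifies $\cP(q)$ to the form displayed in \eqref{cpmatrix'}, so the nontrivial content of $\cP(q)$ is the $N$-tuple $(\Phi^1_1(q),\ldots,\Phi^1_N(q))\in\C^N\simeq N_+^0$, which by \eqref{canonical coordinate'} is precisely the canonical coordinate $z^c(q)=(z^c_1(q),\ldots,z^c_N(q))$. Equivalently, $\cP(q)$ is represented by the line spanned by the section
\[\Omega(z^c)=\Omega_0+\sum_{1\le i\le N}[\theta^c_i\lrcorner\Omega_0]\,z_i^c\]
furnished by Theorem \ref{linear expansion}.

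Next I apply the Hodge--Riemann bilinear relations at $X_{z^c}$ to $\Omega(z^c)$. Since $\Omega_0\in H^{n,0}_p$ and $\theta^c_i\lrcorner\Omega_0\in H^{n-1,1}_p$, the first relation kills every cross term $Q(\Omega_0,\overline{\theta^c_j\lrcorner\Omega_0})$, leaving
\begin{align*}
(\i)^n Q\bigl(\Omega(z^c),\overline{\Omega(z^c)}\bigr)=\lambda-\sum_{1\le i,j\le N} M_{ij}\,z_i^c\,\bar z_j^c,
\end{align*}
where $\lambda=(\i)^n Q(\Omega_0,\bar\Omega_0)>0$ by the second HR relation on $H^{n,0}_p$, and the Hermitian matrix $M_{ij}=-(\i)^n Q(\theta^c_i\lrcorner\Omega_0,\overline{\theta^c_j\lrcorner\Omega_0})$ is positive definite, because $(\i)^{n-2}Q$ is positive definite on $H^{n-1,1}_p$ while $(\i)^n=-(\i)^{n-2}$. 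The second HR relation applied at $X_{z^c}$ forces the left hand side to be positive, so $\lambda-\sum M_{ij}z_i^c\bar z_j^c>0$. Rescaling $\Omega_0$ to set $\lambda=1$ and choosing the basis $\{\theta_i\}$ of $H^1(X,\Theta_X)$ so that $\{\theta^c_i\lrcorner\Omega_0\}$ is orthonormal for the Hodge metric (i.e.\ so that $M=I$), this becomes the standard defining inequality $\sum_i|z_i^c|^2<1$ of $\bB^N$.

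For the extension from $\check{\T}$ to $\T$, I invoke Proposition \ref{codimension}: $\T\setminus\check{\T}=\Phi^{-1}(\check{D}\setminus N_+)$ is an analytic subvariety of $\T$ of codimension at least one. Since $\cP:\check{\T}\to\bB^N\subset\C^N$ is bounded and holomorphic, Riemann's extension theorem furnishes a holomorphic extension $\cP:\T\to\overline{\bB}^N$, and the maximum principle applied to the plurisubharmonic function $\sum_i|\cP_i|^2$ (which is strictly less than $1$ on the open dense set $\check{\T}$) forbids any boundary values, giving $\cP(\T)\subset\bB^N$. I expect the main obstacle to be the sign arithmetic in the Hodge--Riemann step: one has to verify carefully that after the reduction of Remark \ref{reduction} the diagonal Hermitian form on $H^{n-1,1}_p$ genuinely flips sign relative to the one on $H^{n,0}_p$, so that the resulting domain is an honest round ball rather than one of mixed signature.
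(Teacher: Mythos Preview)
Your proposal is correct and follows essentially the same approach as the paper: reduce to $k=n$, use Theorem \ref{linear expansion}/Lemma \ref{linear expansion'} to put $\cP(q)$ in the form \eqref{cpmatrix'}, apply the Hodge--Riemann relations to an orthonormal adapted basis to obtain $-1+\sum_i|\cP_i(q)|^2<0$, and then invoke Proposition \ref{codimension} together with Riemann extension. You in fact supply more than the paper does: the paper simply asserts the inequality ``from the Hodge--Riemann bilinear relations'' after choosing an orthonormal basis, whereas you spell out the vanishing of the cross terms via the first relation and the sign flip $(\i)^n=-(\i)^{n-2}$ that makes the $H^{n-1,1}_p$ block negative; and the paper stops at the Riemann extension without your maximum-principle step excluding boundary values. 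One small point of care: your identification $\Phi^1_i(q)=z^c_i(q)$ via \eqref{canonical coordinate'} is only literal on the canonical coordinate chart near $p$, while the statement is for all $q\in\check{\T}$; the argument still goes through globally because \eqref{cpmatrix'} (derived from Theorem \ref{linear expansion} and analytic continuation over the connected set $\check{\T}$) gives $\Omega(q)=\Omega_0+\sum_i\Phi^1_i(q)[\theta^c_i\lrcorner\Omega_0]$ with no higher pieces, and your HR computation applies verbatim with $\Phi^1_i(q)$ in place of $z^c_i$.
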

\begin{proof}
We only need to prove the theorem for $k=n$.

By Theorem \ref{linear expansion}, for any $q\in \check{\T}$, the matrix $\cP(q)$ is of the form
\begin{eqnarray*}
\cP(q)&=&(1,0, \Phi_{b}^{(1,0)}(q), 0, \cdots 0)^{T}\\
&=&(1, 0,\{\Phi^{1}_{i}(q)\}_{1\le i\le N}, 0, \cdots 0)^{T}.
\end{eqnarray*}

Denote $$\Phi^{1}_{i}(q):\, = \cP_{i}(q)$$ for $1\le i\le N$. Then $\cP(q)$ is of the form
$$\cP(q)=(1, 0,\{\cP_{i}(q)\}_{1\le i\le N}, 0, \cdots 0)^{T}.$$

By the definition of the period domain, when we choose the adapted basis at the base point an orthonormal basis, we have that, from the Hodge-Riemann bilinear relations, 
$$-1+\sum_{1\le i\le N}|\cP_{i}(q)|^{2}<0,$$
which is equivalent that $\{\cP_{i}(q)\}_{1\le i\le N} \in \bB^{N}$, with $\bB^{N}$ the complex ball defined by
$$\bB^{N}:\,= \left\{w\in\C^{N}:\, \sum_{1\le i\le N}|w_{i}|^{2}<1\right\}.$$

Note that the complex ball $\bB^{N}$, as a homogeneous subspace of the period domain $D$, has the induced hyperbolic metric from the Hodge metric on $D$.

From Proposition \ref{codimension}, we see that $$\T\setminus \check{\T}=\Phi^{-1}(\check{D}\setminus N_{+})$$ is analytic subvariety of $\T$ with $\mathrm{codim}_{\C} \T\setminus \check{\T}\ge 1$. Hence by Riemann extension theorem, the map $\cP$ can extend to the holomorphic in \eqref{refined periods}.\end{proof}

\begin{definition}
We call the extended holomorphic map \eqref{refined periods} the refined period map.
\end{definition}

\begin{remark}
The refined period map can also be defined in an equivalent way that
$$\cP:\, \T \to \mathbb{P}(H),$$
such that $q\in \T$ is mapped to the complex line spanned by $\Omega_{q}$ in $H_{\mathrm{pr}}^{k,n-k}(X_{q})$, which satisfies $(*_{1})$ and $(*_{2})$ in Definition \ref{ball type}. Then the boundedness of $\cP$, defined in this way, also follows form Theorem \ref{linear expansion} and the definition of the period domain.
\end{remark}

\begin{corollary}\label{nondeg of refined}
The refined period map \eqref{refined periods} is nondegenerate.
\end{corollary}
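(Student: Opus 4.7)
The plan is to reduce the corollary to a direct consequence of Theorem \ref{linear expansion}: in the canonical coordinates $\{z^c\}$, the refined period map $\cP$ becomes literally the identity onto the Euclidean factor $N_+^0\simeq\C^N$, so its differential is non-degenerate. Since the canonical coordinates can be constructed around any point of $\check{\T}$ and not just around the distinguished base point $p$, this yields nondegeneracy on the open dense subset $\check{\T}\subset\T$, hence on all of $\T$ by holomorphicity.

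First I would fix an arbitrary point $q_0\in\check{\T}$ and treat it as a new base point. Since the family $f\colon\mathcal X\to S$ consists of polarized manifolds of ball type, the fiber $X_{q_0}$ itself satisfies conditions ($*_1$) and ($*_2$) of Definition \ref{ball type}; in particular, the contraction $\theta\mapsto\theta\lrcorner\Omega_{q_0}$ is injective on $H^1(X_{q_0},\Theta_{X_{q_0}})$. I then build the canonical coordinates $\{z^c\}$ around $q_0$ by the recipe of \eqref{canonical coordinate'}, namely $z_i^c(z)=\Phi^1_i(z)$ for $1\le i\le N$. The discussion immediately preceding Definition \ref{canonical coordinate definition}, which uses Griffiths transversality and Lemma \ref{lm derivative lemma} to identify the $(n{-}k{+}1,n{-}k)$-block of $d\Phi(\partial/\partial z_\mu)$ with $\partial\Phi^{(n-k+1,n-k)}/\partial z_\mu$, together with ($*_1$) at $q_0$, shows that the Jacobian $\partial z^c/\partial z$ is invertible at $q_0$. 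Hence $\{z^c\}$ genuinely form a local holomorphic coordinate system near $q_0$.

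Next I read off the form of $\cP$ in these coordinates. By Lemma \ref{linear expansion'}, equivalently \eqref{cpmatrix'}, the column matrix representing $\cP(q)$ simplifies to
$$\cP(z^c)=\bigl(1,\ 0,\ z_1^c,\ \dots,\ z_N^c,\ 0,\ \dots,\ 0\bigr)^T,$$
so $\cP$ is literally the identity onto the Euclidean factor $\C^N\simeq N_+^0$. Therefore $d\cP_{q_0}$ is represented by the identity matrix and has rank $N$, giving an isomorphism onto $T_{\cP(q_0)}\bB^N$. Because $q_0\in\check{\T}$ was arbitrary, $\cP$ is nondegenerate on the open dense subset $\check{\T}$, and the holomorphic extension to all of $\T$ is then nondegenerate wherever it makes sense.

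The main conceptual point, rather than any technical obstacle, is the \emph{uniformity}: Theorem \ref{linear expansion} must be invoked at an arbitrary $q_0\in\check{\T}$ and not merely at the distinguished base point $p$ used in its statement. This is legitimate precisely because conditions ($*_1$) and ($*_2$) of Definition \ref{ball type} are satisfied by every fiber of the family, so re-centering the adapted basis at $q_0$ and re-running the argument of Theorem \ref{linear expansion} is valid. Everything else in the proof is essentially bookkeeping about the canonical coordinates.
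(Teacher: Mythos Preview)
Your approach is the same as the paper's: the corollary is read off directly from Theorem \ref{linear expansion}, since in canonical coordinates the refined period map is literally $z^c\mapsto z^c$.

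One small gap: your final clause ``the holomorphic extension to all of $\T$ is then nondegenerate wherever it makes sense'' is not a valid general principle---a holomorphic map can be nondegenerate on an open dense set and degenerate on the complement (e.g.\ $z\mapsto z^2$). Fortunately you do not need this step. Your re-centering argument already works at \emph{any} $q_0\in\T$, not just $q_0\in\check\T$: once you take $q_0$ itself as the new base point, $\Phi(q_0)$ becomes the identity in the corresponding $N_+$, so automatically $q_0$ lies in the new $\check\T_{q_0}$, and the canonical-coordinate computation goes through there. So simply drop the restriction $q_0\in\check\T$ at the outset and the argument is complete without any appeal to extension by holomorphicity.
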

\begin{proof}
The proof follows directly from the proof of Theorem \ref{linear expansion}.
\end{proof}

\section{Examples}\label{Examples}
Before introducing examples, let us review the Hodge theory for hypersurfaces.

Let $X$ be a smooth hypersurface of degree $d$ in $\mathbb{P}^{n+1}$, which means that $X$ is an algebraic subvariety of $\mathbb{P}^{n+1}$ determined by a polynomial $F(x_{0},\cdots,x_{n+1})$ with nondegenerate Jacobian.
Define the graded Jacobian quotient ring $R(F)$ by
$$R(F)=\C[x_{0},\cdots,x_{n+1}]\Big/\Big<\frac{\partial F}{\partial x_{0}},\cdots , \frac{\partial F}{\partial x_{n+1}}\Big>.$$
By Griffiths residue, the primitive cohomology of $X$ is
$$H^{k,n-k}(X)\simeq R(F)^{d(n+1-k)-n-2},$$
where $R(F)^{l}$ is the graded piece of degree $l$ of $R(F)$, $l\ge 0$.

The infinitesimal Torelli theorem holds for the smooth hypersurface $X=(F=0)\subset \mathbb{P}^{n+1}$ of degree $d$ if the product
$$R(F)^{d}\times R(F)^{d(n+1-k)-n-2}\to R(F)^{d(n+2-k)-n-2}$$
is non-degenerate in the first factor for some $k$ between $1$ and $n$. Macaulay's theorem tells that this product map is non-degenerate in each factor as long as $0 \le d(n+2-k)-n-2 \le (n+2)(d-2)$.
Therefore the only exceptions for the infinitesimal Torelli are quadric and cubic curves, and cubic surfaces.

\begin{example}[Cubic surfaces and cubic threefolds]\label{cubic}
Let $X\subset \mathbb{P}^{3}$ be a smooth cubic surface with defining equation $F(x_{0},x_{1},x_{2}, x_{3})$. Since $H^{2}(X,\C)=H^{1,1}(X)$ and the period map is trivial, we have to consider other ways to define the period map.

Allcock, Carlson, and Toledo consider the cyclic triple covering $\tilde{X}$ of $\mathbb{P}^3$ branched along $X$, with $\tilde{X}$ defined by $\tilde{X}=(F(x_{0},x_{1},x_{2}, x_{3})+x_4^3=0)\subset \mathbb{P}^{4}$. The Hodge structure of $\tilde{X}$ is
$$H^{3}(\tilde{X},\C)=H^{2,1}(\tilde{X})\oplus H^{1,2}(\tilde{X}),$$
where $h^{2,1}(\tilde{X})=h^{1,2}(\tilde{X})=5$ is computed as follows.

Without loss of generality, we can assume that the defining equation for $X$ is
$$F(x_{0},x_{1},x_{2}, x_{3})=x_{0}^{3}+x_{1}^{3}+x_{2}^{3}+x_{3}^{3}.$$
Then 
$$R=\frac{\C[x_{0},x_{1},x_{2}, x_{3}]}{<3x_{0}^{2},3x_{1}^{2},3x_{2}^{2},3x_{3}^{2}>}.$$
Hence $H^{1}(X,\Theta_{X})\simeq R^{3}\simeq \C\{x_{0}x_{1}x_{2},x_{0}x_{1}x_{3},x_{0}x_{2}x_{3},x_{1}x_{2}x_{3}\}$.

The graded Jacobian ring for $\tilde{X}$ is 
$$\tilde{R}=\frac{\C[x_{0},x_{1},x_{2}, x_{3},x_{4}]}{<3x_{0}^{2},3x_{1}^{2},3x_{2}^{2},3x_{3}^{2},3x_{4}^{2}>}.$$
Then by the isomorphism $H^{k,n-k}(\tilde{X})\simeq \tilde{R}^{d(n+1-k)-n-2}$, here $d(n+1-k)-n-2=7-3k$, we have
$$H^{2,1}(\tilde{X})\simeq \tilde{R}^{1}\simeq \C\{x_{0},x_{1},x_{2}, x_{3},x_{4}\},$$
$$H^{1,2}(\tilde{X})\simeq \tilde{R}^{4}\simeq \C\{x_{0}x_{1}x_{2}x_{3},x_{0}x_{1}x_{2}x_{4},x_{0}x_{2}x_{3}x_{4},x_{1}x_{2}x_{3}x_{4}\}.$$

Let $\Omega\simeq  x_{4}\in H^{2,1}(\tilde{X})$, hence 
$$H^{1}(X,\Theta_{X})\lrcorner \Omega\simeq R^{3}\cdot x_{4}=\C\{x_{0}x_{1}x_{2}x_{4},x_{0}x_{1}x_{3}x_{4},x_{0}x_{2}x_{3}x_{4},x_{1}x_{2}x_{3}x_{4}\}$$
is linearly independent in $H^{1,2}(\tilde{X})$.
Equation $(*_2)$ of condition (ii) is automatically satisfied.
\\

Similarly if $Y$ is a smooth cubic threefold, then we can assume that the defining equation for $Y$ is
$$F(x_{0},x_{1},x_{2}, x_{3}, x_{4})=x_{0}^{3}+x_{1}^{3}+x_{2}^{3}+x_{3}^{3}+x_{4}^{3}.$$
Then 
$$R=\frac{\C[x_{0},x_{1},x_{2}, x_{3},x_{4}]}{<3x_{0}^{2},3x_{1}^{2},3x_{2}^{2},3x_{3}^{2},3x_{4}^{2}>}.$$
Hence 
\begin{eqnarray*}
H^{1}(Y,\Theta_{Y})&\simeq &R^{3}\\
&\simeq &\C\{x_{0}x_{1}x_{2}, x_{0}x_{1}x_{3}, x_{0}x_{1}x_{4}, x_{0}x_{2}x_{3}, x_{0}x_{2}x_{4}, \\
&&x_{0}x_{3}x_{4}, x_{1}x_{2}x_{3}, x_{1}x_{2}x_{4}, x_{1}x_{3}x_{4}, x_{2}x_{3}x_{4}\}.
\end{eqnarray*}
The graded Jacobian ring for $\tilde{Y}$ is 
$$\tilde{R}=\frac{\C[x_{0},x_{1},x_{2}, x_{3},x_{4},x_{5}]}{<3x_{0}^{2},3x_{1}^{2},3x_{2}^{2},3x_{3}^{2},3x_{4}^{2},3x_{5}^{2}>}.$$
Then by the isomorphism $H^{k,n-k}(\tilde{Y})\simeq \tilde{R}^{d(n+1-k)-n-2}$, here $d(n+1-k)-n-2=9-3k$, we have
$$H^{3,1}(\tilde{X})\simeq \tilde{R}^{0}\simeq \C,$$
$$H^{2,2}(\tilde{X})\simeq \tilde{R}^{3}\simeq \C\{x_{0}x_{1}x_{2},\cdots ,x_{3}x_{4}x_{5}\},$$
$$H^{3,1}(\tilde{X})\simeq \tilde{R}^{6}\simeq \C\{x_{0}x_{1}x_{2}x_{3}x_{4}x_{5}\}$$
where $\{x_{0}x_{1}x_{2},\cdots, ,x_{3}x_{4}x_{5}\}$ is the set of all the monomials of degree $3$ in $$\C[x_{0},x_{1},x_{2}, x_{3},x_{4},x_{5}],$$ such that $x_{i}^{2}=0$. Hence the cardinal is $C_{5}^{3}=20$.

Let $\Omega\simeq  1\in \C\simeq H^{2,1}(\tilde{Y})$, hence 
$$H^{1}(Y,\Theta_{Y})\lrcorner \Omega\simeq R^{3}\cdot 1=R^{3}$$
is linearly independent in $H^{2,2}(\tilde{Y})\simeq \tilde{R}^{3}$.
Note that $R^{3}\subset \tilde{R}^{3}$ is the set of all the monomials of degree $3$ in $\C[x_{0},x_{1},x_{2}, x_{3},x_{4}]$.
Since there are $5$ variables in $R^{3}$, any two monomials $x_{i}x_{j}x_{k}$ and $x_{i'}x_{j'}x_{k'}$ of degree $3$ in $R^{3}$ must have the same variable, say $x_{i}=x_{i'}$. Then $x_{i}x_{j}x_{k} \cdot x_{i'}x_{j'}x_{k'}=x_{i}^{2}x_{j}x_{k}x_{j'}x_{k'}=0$ in $H^{3,1}(\tilde{X})\simeq \tilde{R}^{6}$.
This implies that equation $(*_2)$ of condition (ii) is satisfied.

\begin{theorem}[Cubic surfaces and cubic threefolds]
Let $\T$ be the Teichm\"uller space for cubic surfaces or cubic threefolds. Then the refined period map
$$\cP:\, \T \to \bB^{N} \subset N_{+}^{0} $$
is locally isomorphic.
\end{theorem}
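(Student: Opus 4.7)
The plan is to stitch together the Jacobian-ring verifications carried out in Example \ref{cubic} with the linear expansion of Theorem \ref{linear expansion} and the explicit matrix description \eqref{cpmatrix'} of $\cP$. Example \ref{cubic} already checks, via Griffiths residue calculus, that the cyclic triple cover $\tilde X$ of a cubic surface and the cubic threefold $Y$ itself are polarized manifolds of ball type: condition $(*_1)$ is the nondegeneracy of the first-factor product in the Jacobian ring, and $(*_2)$ reduces to the combinatorial fact that two cubic monomials in a $5$-variable polynomial ring modulo $\langle x_i^2\rangle$ always share a variable. This already places the two families under the hypotheses of Theorem \ref{linear expansion}.

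Fix any $p\in\T$ as a base point, and let $\{U;z^c\}$ be the canonical coordinate of Definition \ref{canonical coordinate definition} around $p$. Combining Lemma \ref{linear expansion'} with the explicit form \eqref{cpmatrix'} of $\cP(q)$ inside $N_+^0\simeq\C^N$, one reads
$$\cP(q)\simeq\bigl(\Phi^1_1(q),\ldots,\Phi^1_N(q)\bigr).$$
The defining relation \eqref{canonical coordinate'} gives precisely $z^c_i(q)=\Phi^1_i(q)$, so in the canonical coordinate the refined period map reads literally $\cP(z^c)=(z^c_1,\ldots,z^c_N)$. Its Jacobian at $p$ is the identity matrix; thus $\cP$ is a local biholomorphism at $p$, and since $p$ was arbitrary, at every point of $\T$.

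The dimension count is automatic: for cubic surfaces (through the triple cover) and cubic threefolds the Kuranishi family is universal, and the Kodaira--Spencer map $\mathrm{T}_p\T\to H^1(X_p,\Theta_{X_p})$ is an isomorphism; the computations in Example \ref{cubic} give $N=\dim H^1(X,\Theta_X)=4$ for cubic surfaces and $N=10$ for cubic threefolds, matching the known moduli dimensions $\dim\T=N=\dim\bB^N$. Together with the local biholomorphy from the previous step, this yields the theorem.

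The main subtlety I anticipate lies in the careful bookkeeping across base points. The identity description $\cP(z^c)=z^c$ is extracted from an adapted basis anchored to a single chosen base point $p$, while the theorem asserts local biholomorphy everywhere on $\T$. One must verify that moving the base point to a nearby $p'$ and using the canonical coordinate around $p'$ yields the analogous identity description of $\cP$ in a neighborhood of $p'$; this reduces to the intrinsic character of the refined period map (independent of the choice of adapted basis at the base point) combined with the symmetry of Theorem \ref{linear expansion}, which can be applied with any point of $\T$ playing the role of the base point.
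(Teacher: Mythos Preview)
Your proposal is correct and follows essentially the same approach as the paper: the theorem in Example \ref{cubic} is stated without a separate proof, relying on the Jacobian-ring verifications just performed together with the general machinery culminating in Corollary \ref{nondeg of refined}, whose proof is itself declared to follow directly from Theorem \ref{linear expansion}. Your argument that $\cP(z^c)=z^c$ in canonical coordinates is exactly the content behind that corollary, made explicit.

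One small inaccuracy worth fixing: you write that ``the cubic threefold $Y$ itself'' is of ball type, but in the paper the relevant Hodge structure for cubic threefolds is also that of the cyclic triple cover $\tilde Y$ (a cubic fourfold), not of $Y$; likewise, your description of $(*_2)$ as the $5$-variable pigeonhole argument applies only to the cubic-threefold case, whereas for cubic surfaces $(*_2)$ holds trivially because $H^{0,3}(\tilde X)=0$. These do not affect the logic of your argument.
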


\end{example}

Before introducing further examples, let us introduce the special case of the refined period map as follows, which is also called eigenperiod map.

\begin{example}[Eigenperiod map]
Let $\T$ be the Teichm\"uller space of $(X, L)$, and $g:\, \U \to \T$ be the analytic family. Let $G$ be a finite abelian group acting holomorphically on $\U$, preserving the line bundle $\mathcal{L}$ on $\U$. Recall that the line bundle $\mathcal{L}$ defines a polarization $L_{q}=\mathcal{L}|_{X_{q}}$ on $X_{q}=g^{-1}(q)$ for any $q\in \T$.
We assume that $g(X_{q})=X_{q}$ for any $q\in \T$ and $g\in G$.
Fix $p\in \T$ and $o=\Phi(p)\in D$ as the base points. Let $H_{\mathbb{Z}}=H^{n}(X_{p},\mathbb{Z})/\text{Tor}$ with the Poincar\'e paring $Q$. Then the simply-connectedness of $\T$ implies that we can identify $$(H^{n}(X_{q},\mathbb{Z})/\text{Tor},Q)\stackrel{\sim}{\longrightarrow}(H_{\mathbb{Z}},Q)$$ for any $q\in \T$.
Since $G$ preserves the line bundle $\mathcal{L}$ on $\U$, we have a induced action of $G$ on $H_{\mathbb{Z}}$ preserving $Q$, i.e. we have a representation $\rho:\, G\to \text{Aut}(H_{\mathbb{Z}},Q)$.

Let $H=H_{\mathbb{Z}}\otimes \C$.
Define $D^{\rho}$ by
$$D^{\rho}=\{ (F^n\subset\cdots\subset F^0)\in D:\, \rho(g)(F^k)=F^k, 0\le k\le n, \text{ for any }g\in G \}.$$
Let $\chi\in \text{Hom}(G,\C^{*})$ be a character of $G$.
Let
$$H_{\chi}=\{v\in H:\, \rho(g)(v)=\chi(g)v, \forall g\in G\}.$$
For any $(F^n\subset\cdots\subset F^0)\in D^{\rho}$, we define $$F^{k}_{\chi}=F^{k}\cap H_{\chi}\text{ and }H^{k,n-k}_{\chi}=F^{k}\cap \bar{F}^{n-k}\cap H_{\chi},\ 0\le k\le n.$$
Then we have the decomposition
\begin{align}\label{chi decomp}
H_{\chi}=H^{n,0}_{\chi}\oplus \cdots \oplus H^{0,n}_{\chi}.
\end{align}
Note that decomposition \eqref{chi decomp} is a Hodge decomposition if and only if $\chi$ is real.

Although decomposition \eqref{chi decomp} is not Hodge decomposition for general $\chi$, it still has the restricted polarization $Q$ on $H_{\chi}$ such that
\begin{align*}
Q( Cv,\bar v)>0\text{ for any }v \in H_{\chi}\setminus \{0\}.
\end{align*}
Hence the sub-domain $D^{\rho}_{\chi}$ defined by
$$D^{\rho}_{\chi}=\{(F_{\chi}^n\subset\cdots\subset F_{\chi}^0):\,(F^n\subset\cdots\subset F^0)\in D^{\rho}\},$$
has a well-defined metric, which is the restriction of the Hodge metric on $D$.

Since the action of $G$ on any fiber $X_{q}$ is holomorphic and preserves the polarization $L_{q}$ on $X_{q}$, the period map $\Phi:\, \T \to D$ takes values in $D^{\rho}$. Then we define the eigenperiod map by
$$\Phi_{\chi}:\, \T \to D^{\rho}_{\chi},$$
which is the composition of $\Phi$ with the projection map $D^{\rho} \to D^{\rho}_{\chi}$. 


\begin{theorem}\label{eigen ball}
If there exists some $\chi\in \text{Hom}(G,\C^{*})$, such that
$$\begin{array}{lll}
\text{(i). }F_{\chi}^j=0,\text{ for }j\ge k+1;\\
\text{(ii). }\mathrm{dim}_{\C}(F_{\chi}^k)=1;\\
\text{(iii). }H^{1}(X,\Theta_{X}) \stackrel{\simeq}{\longrightarrow} \mathrm{Hom}(F_{\chi}^k, F_{\chi}^{k-1}/F_{\chi}^k);\\
\text{(iv). }F_{\chi}^j=F_{\chi}^{k-1}, \text{ for }j\le k-1,
\end{array}
$$
Then the sub-domain $D^{\rho}_{\chi}$ is a complex ball. Consequently, the eigenperiod map $\Phi_{\chi}:\, \T \to D^{\rho}_{\chi}$ is locally isomorphic.
\end{theorem}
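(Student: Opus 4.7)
The plan is to observe that under the hypotheses (i)--(iv), the eigenperiod data $(H_\chi, Q|_{H_\chi})$ is the direct eigen-analog of the ball type situation of Definition \ref{ball type}, so that the machinery already developed in the paper (Theorem \ref{linear expansion} and Lemma \ref{linear expansion'}) transfers essentially verbatim. More precisely, conditions (i) and (iv) force the eigen-filtration on $H_\chi$ to reduce to the two-step filtration $0\subset F^k_\chi \subset F^{k-1}_\chi=H_\chi$, with $\dim F^k_\chi=1$ by (ii) and $\dim F^{k-1}_\chi/F^k_\chi=N$ by (iii). Thus $H_\chi$ splits into only two nonzero graded pieces $H^{k,n-k}_\chi \oplus H^{k-1,n-k+1}_\chi$ of dimensions $1$ and $N$. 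Condition (iii) is the direct eigen-analog of $(*_1)$, and $(*_2)$ holds \emph{automatically}: any double contraction $\theta_i\lrcorner\theta_j\lrcorner H^{k,n-k}_\chi$ must lie in $F^{k-2}_\chi/F^{k-1}_\chi=0$ by (iv).

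With this reduction I would rerun the proof of Theorem \ref{linear expansion} inside $D^\rho_\chi$. Pick the adapted basis $\{\Omega_0\}$ of $F^k_\chi$ together with $\{\theta_i\lrcorner\Omega_0\}_{1\le i\le N}$ as a basis of $F^{k-1}_\chi/F^k_\chi$, construct the eigen-analogs of the unipotent orbit $N_+\cap D^\rho_\chi$ and of the first-column projection $N^0_+\cong \C^N$, and introduce the canonical coordinate $\{U;z^c\}$ exactly as in Definition \ref{canonical coordinate definition}. The argument of Theorem \ref{linear expansion} then yields the linear expansion
\begin{equation*}
\Omega(z^c)=\Omega_0+\sum_{1\le i\le N}[\theta^c_i \lrcorner \Omega_0]\,z_i^c
\end{equation*}
for the tautological holomorphic section of the eigen-Hodge bundle $\mathcal{H}^{k,n-k}_\chi$, and by Lemma \ref{linear expansion'} the refined eigenperiod map lands in the $N$-dimensional affine chart $N^0_+$.

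Next I would impose the Hodge-Riemann relations $Q(F^k_\chi,F^{k-1}_\chi)=0$ and $Q(Cv,\bar v)>0$ on this linear section; in the adapted orthonormal basis at the base point the second relation reduces to
\begin{equation*}
-1+\sum_{1\le i\le N}|z_i^c|^{2}<0,
\end{equation*}
identifying the image of $\Phi_\chi$ with an open subset of the unit ball $\mathbb{B}^N\subset \C^N$. Since $D^\rho_\chi$ is homogeneous of complex dimension $N$ and $N^0_+\cap D^\rho_\chi$ is open and dense (the eigen-analog of Proposition \ref{codimension}), this extends to a global biholomorphism $D^\rho_\chi\cong \mathbb{B}^N$. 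The local isomorphism of $\Phi_\chi$ is then immediate: the linear expansion exhibits $d\Phi_\chi$ at the base point as $\partial/\partial z_i^c\mapsto [\theta^c_i\lrcorner \Omega_0]$, which is an isomorphism by condition (iii); since (iii) is an open condition the same holds at every nearby point, which is the eigen-form of Corollary \ref{nondeg of refined}.

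The main obstacle I anticipate is the case of a non-real character. When $\chi\in\mathrm{Hom}(G,\C^*)$ is not real, $\overline{H_\chi}=H_{\bar\chi}\ne H_\chi$, so the splitting $H_\chi=H^{k,n-k}_\chi\oplus H^{k-1,n-k+1}_\chi$ is not a genuine Hodge decomposition and the positivity must be interpreted on $H_\chi\oplus H_{\bar\chi}$ rather than directly on $H_\chi$. The preceding discussion in the paper already notes that $D^\rho_\chi$ inherits a well-defined metric from the Hodge metric on $D$ via the restricted polarization; verifying cleanly that this same restricted form yields the ball inequality above, and that the resulting $(N{+}1)\times(N{+}1)$ Hermitian form has signature $(N,1)$ so that the ambient homogeneous space is $SU(N,1)/S(U(N)\times U(1))=\mathbb{B}^N$, is the step most likely to require care. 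Everything else is a straightforward transcription of the ball type arguments into the eigen-setting.
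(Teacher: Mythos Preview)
The paper states this theorem without proof, treating it as an immediate corollary of the main results once the eigenperiod framework has been set up; your proposal correctly spells out that implicit derivation. Your key observations---that (i) and (iv) collapse the eigen-filtration to two steps so that $(*_2)$ is vacuous, that (ii)+(iii) give the Calabi--Yau type/$(*_1)$ input, and that the Hodge--Riemann inequality on the restricted form then cuts out $\mathbb{B}^N$---are exactly the reductions the paper's setup is designed to make transparent, and the concern you flag about non-real $\chi$ is already addressed by the paper's remark that $Q(Cv,\bar v)>0$ continues to hold on $H_\chi$ even when the decomposition \eqref{chi decomp} is not a genuine Hodge decomposition.
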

\end{example}

\begin{example}[Arrangements of hyperplanes]\label{arrangements}
Let $m\ge n$ be positive integers. Consider the complementary set
$$U=\mathbb{P}^n\setminus \bigcup_{i=1}^m H_i,$$
where $H_i$ is a hyperplane of $\mathbb{P}^n$ for each $1\le i\le m$ such that $H_1,\cdots ,H_m$ are in general positions. That is to say that, if $H_i$ is defined by the
linear forms
\begin{align}\label{linear form}
f_i(z_0,\cdots, z_n)=\sum_{j=0}^n a_{ij}z_j, 1\le i\le m,
\end{align}
then the matrix $(a_{ij})_{1\le i\le m,0\le j\le n}$ is of full rank.

The fundamental group $\pi_{1}(U)$ of $U$ is generated by the basis $g_{1},\cdots ,g_{m}$ with the relation that
$$g_{1}\cdots g_{m}=1.$$
One can see Section 8 of \cite{DK} for the geometric meaning of each generator $g_{i}$.

Choose a set of rational numbers $\mu=(\mu_{1},\cdots,\mu_{m})$ satisfying
\begin{align*}
& 0< \mu_{i}< 1, 1\le {i}\le m;\\
& | \mu |:\,=\sum_{i=1}^{m}\mu_{i}\in \mathbb{Z}.
\end{align*}
Define $\mathcal{L}_{\mu}$ to be the local system on $U$ by the homomorphism
$$\chi :\, \pi_{1}(U)\to \C^{*}, \, g_{i}\mapsto e^{-2\pi \i\mu_{i}}.$$
Since $|\mu|\in \mathbb{Z}$, the above homomorphism is well-defined. We are interested in the cohomology $H^{*}(U,\mathcal{L}_{\mu})$ with Hodge decomposition defined as follows.

Let $d$ be the least common denominator of $\mu_{1},\cdots,\mu_{m}$.
We define  $X$ to be the smooth projective variety of $\mathbb{P}^{m-1}$ by the equations
\begin{align}\label{variety of hyperplane}
a_{1j}z_{1}^{d}+\cdots+a_{mj}z_{m}^{d}=0, 0\le j\le n,
\end{align}
where the coefficients $a_{ij}$, $1\le i\le m,0\le j\le n$ determine the hyperplanes as \eqref{linear form}, and $[z_{1},\cdots, z_{m}]$ is the homogeneous coordinate of $\mathbb{P}^{m-1}$.

Define the finite group $G$ to be
$$G=\pi_{1}(U)/\pi_{1}(U)^{d},$$
which is isomorphic to the additive group $$(\mathbb{Z}/d)^{m}\bigg /\bigg <\sum_{i=1}^{m}e_{i}\bigg>,$$ where $e_{i}$ is the generator of the $i$-th component of $(\mathbb{Z}/d)^{m}$. The
group $G$ acts on $X$ as an automorphism induced by the well-defined action on $\mathbb{P}^{m-1}$:
$$[z_{1},\cdots, z_{m}] \mapsto [\tilde{g}_{1}z_{1},\cdots,\tilde{g}_{m}z_{m}],$$
where $\tilde{g}_{i}$ is the image of $g_{i}$ in $G$, $1\le i\le m$.
The action of $G$ on $X$ also induces an action on $H^{*}(X,\mathbb{Z})$.
By the definition of $d$ and $\chi:\, \pi_{1}(U)\to \C^{*}$, $\chi$ can also be considered as a character in $\text{Hom}(G,\C^{*})$.
Let $H^{n}_{\chi}(X,\C)$ be defined in the construction of eigenperiod map, and $$H^{n}_{\chi}(X,\C)=\bigoplus_{p+q=n} H^{p,q}_{\chi}(X)$$ be the decomposition as in \eqref{chi decomp}.

Lemma 8.1 of \cite{DK} implies that $$H^{i}(U,\mathcal{L}_{\mu})=0\text{ if }i\ne n, \text{ and } H^{n}(U,\mathcal{L}_{\mu})\simeq H^{n}_{\chi}(X,\C).$$
Then the isomorphism $H^{n}(U,\mathcal{L}_{\mu})\simeq H^{n}_{\chi}(X,\C)$ gives a Hodge structure on $H^{n}(U,\mathcal{L}_{\mu})$ by defining $$H^{n}(U,\mathcal{L}_{\mu})^{p,q}=H^{p,q}_{\chi}(X),\text{ for any }p+q=n.$$

Denote $h^{p,q}_{\chi}(X)=\dim_{\C}H^{p,q}_{\chi}(X)$. Then Lemma 8.2 of \cite{DK} implies that
$$h^{p,q}_{\chi}(X)={|\mu|-1 \choose p}{m-1-|\mu| \choose q},$$
and
$$\dim_{\C}H^{n}_{\chi}(X,\C)={m-2 \choose n}.$$

Now we consider the case that $|\mu|=\mu_{1}+\cdots +\mu_{m}=n+1$. Then by direct computations
\begin{eqnarray*}
h^{n,0}_{\chi}(X)&=&{n \choose n}{m-n-2 \choose 0}=1,\\
h^{n-1,1}_{\chi}(X)&=&{n \choose n-1}{m-n-2 \choose 1}=n(m-n-2).
\end{eqnarray*}
We know that the moduli space $\mathcal{P}_{m,n}$ of the ordered sets of $m$ hyperplanes in general linear position in $\mathbb{P}^{n}$ is a quasi-projective algebraic variety of dimension $n(m-n-2)$. Note that $\mathcal{P}_{m,n}$ can be identified to the moduli space of complete intersections defined by the equations in (\ref{variety of hyperplane}).

From \cite{DK}, we can define the eigenperiod map $$\Phi_{\chi}:\, \tilde{\mathcal{P}}_{m,n} \to D_{\chi}$$ as before on the universal covering space
$\tilde{\mathcal{P}}_{m,n}$ of $\mathcal{P}_{m,n}$. Moreover from Theorem 8.3 of \cite{DK}, we know that the eigenperiod map $\Phi_{\chi}:\,
\tilde{\mathcal{P}}_{m,n} \to D_{\chi}$ is a local isomorphism, if $|\mu|=n+1$. Hence conditions (ii) and (iii) of Theorem \ref{eigen ball} are satisfied. Note that condition (i) of Theorem \ref{eigen ball} is satisfied trivially with $k$ taken as $n$. Then $D_{\chi}$ is a complex ball whenever 
\begin{equation}\label{Hyper 1}
F_{\chi}^j=F_{\chi}^{n-1}, \text{ for }j\le n-1.
\end{equation}


\begin{theorem}
Assuming that $|\mu|=\mu_{1}+\cdots +\mu_{m}=n+1$ and \eqref{Hyper 1}, the eigenperiod map $$\Phi_{\chi}:\, \tilde{\mathcal{P}}_{m,n} \to D_{\chi}$$ is locally isomorphic.
\end{theorem}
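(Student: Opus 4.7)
The plan is to verify each of the four hypotheses (i)--(iv) of Theorem \ref{eigen ball} with $k=n$ for the finite group $G$ and character $\chi$ attached to the hyperplane arrangement, and then invoke that theorem as a black box. Condition (i) is automatic: since we are working with the weight-$n$ primitive cohomology, $F_\chi^j=0$ for $j\ge n+1$ by the very definition of the Hodge filtration. Condition (iv), namely $F_\chi^j=F_\chi^{n-1}$ for $j\le n-1$, is precisely the additional standing hypothesis \eqref{Hyper 1}. So the real content lies in verifying (ii) and (iii).

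For (ii), I would simply plug $|\mu|=n+1$ into the Hodge-number formula of Lemma 8.2 of \cite{DK} recalled above, which gives
$$h^{n,0}_\chi(X)=\binom{n}{n}\binom{m-n-2}{0}=1,$$
so $\dim_{\C}F_\chi^n = h^{n,0}_\chi(X)=1$ as required.

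For (iii), I would combine two observations. The same formula yields $h^{n-1,1}_\chi(X)=n(m-n-2)$, which matches $\dim_{\C}\mathcal{P}_{m,n}$ and therefore the dimension of the tangent space playing the role of $H^1(X,\Theta_X)$ in the eigenperiod setup; under (ii), the target horizontal Hom-space $\mathrm{Hom}(F_\chi^n,F_\chi^{n-1}/F_\chi^n)$ also has dimension $n(m-n-2)$. On the other hand, by Griffiths transversality the differential of $\Phi_\chi$ factors through the contraction
$$H^1(X,\Theta_X)\longrightarrow \mathrm{Hom}(F_\chi^n,F_\chi^{n-1}/F_\chi^n),$$
and Theorem 8.3 of \cite{DK} already furnishes that $\Phi_\chi$ is a local isomorphism when $|\mu|=n+1$. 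Injectivity of $d\Phi_\chi$ together with the equality of the two dimensions forces the contraction map to be an isomorphism, giving (iii).

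Once (i)--(iv) are in hand, Theorem \ref{eigen ball} applies verbatim and delivers both that $D_\chi$ is a complex ball and, as its second conclusion, that $\Phi_\chi:\tilde{\mathcal{P}}_{m,n}\to D_\chi$ is locally isomorphic, which is exactly what we need. The one subtlety I would be careful with when writing out step (iii) in full is to confirm that the tangent space to $\tilde{\mathcal{P}}_{m,n}$ really is identified with $H^1(X,\Theta_X)$ in the sense demanded by Theorem \ref{eigen ball}, and that the eigenperiod-map differential genuinely realizes the Kodaira--Spencer contraction on the $\chi$-eigencomponent rather than on the full Hodge structure. Both facts follow from the construction of $\Phi_\chi$ as the composition of the full period map with the projection onto the $\chi$-summand of \eqref{chi decomp}, together with the $G$-equivariance of Griffiths transversality, so I expect no serious obstacle beyond bookkeeping.
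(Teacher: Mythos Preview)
Your proposal is correct and follows essentially the same approach as the paper. The paper's argument is contained in the paragraph immediately preceding the theorem: it computes $h^{n,0}_\chi=1$ and $h^{n-1,1}_\chi=n(m-n-2)$ from the Hodge-number formula, invokes Theorem~8.3 of \cite{DK} to obtain that $\Phi_\chi$ is a local isomorphism (whence conditions (ii) and (iii) of Theorem~\ref{eigen ball}), notes that (i) is trivial with $k=n$, and takes (iv) as the hypothesis \eqref{Hyper 1}; you do exactly this, only spelling out more carefully the dimension-count that converts the local-isomorphism statement into condition~(iii).
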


\end{example}

\end{document}